\definecolor{mycolor1}{rgb}{0.105882,0.619608,0.466667}
\definecolor{mycolor2}{rgb}{0.85098,0.372549,0.00784314}
\definecolor{mycolor3}{rgb}{0.458824,0.439216,0.701961}
\definecolor{mycolor4}{rgb}{0.905882,0.160784,0.541176}
\definecolor{mycolor5}{rgb}{0.4,0.65098,0.117647}
\definecolor{mycolor6}{rgb}{0.65098,0.462745,0.113725}
\definecolor{mycolor7}{rgb}{0.901961,0.670588,0.00784314}
\definecolor{mycolor8}{rgb}{0.4,0.4,0.4}
\definecolor{mycolor9}{rgb}{0.301961,0,0.294118}
\definecolor{mycolor10}{rgb}{0.0313725,0.25098,0.505882}
\newif\ifmygrid@coordinates
\tikzset{/mygrid/step line/.style={line width=0.80pt,draw=gray!80},
         /mygrid/steplet line/.style={line width=0.25pt,draw=gray!80}}
\def\mygrid@def@coordinates(#1,#2)(#3,#4){%
    \def\mygrid@xlo{#1}%
    \def\mygrid@xhi{#3}%
    \def\mygrid@ylo{#2}%
    \def\mygrid@yhi{#4}%
}
\newcommand\DrawGrid[3][]{%
    \pgfkeys{/mygrid/.cd,coordinates=true,step=1,steplet=0.2,#1}%
    \draw[/mygrid/steplet line] #2 grid[step=\mygrid@steplet] #3;
    \draw[/mygrid/step line] #2 grid[step=\mygrid@step] #3;
    \mygrid@def@coordinates#2#3%
    \ifmygrid@coordinates%
        \draw[/mygrid/step line]
        \foreach \xpos in {\mygrid@xlo,...,\mygrid@xhi} {%
          (\xpos,\mygrid@ylo) -- ++(0,-3pt)
                              node[anchor=north] {$\xpos$}
        }
        \foreach \ypos in {\mygrid@ylo,...,\mygrid@yhi} {%
          (\mygrid@xlo,\ypos) -- ++(-3pt,0)
                              node[anchor=east] {$\ypos$}
        };
    \fi%
}
\def\@seccntformat#1{\protect\makebox[0pt][r]{\csname the#1\endcsname\hspace{12pt}}}\makeatother
\newcommand{\remove}[1]{}
\newcommand{\removesafe}[1]{}
\newcommand{\transpose}{^\top\! }
\newcommand{\inner}[2]{\left\langle{#1},{#2}\right\rangle}
\newcommand{\innersmall}[2]{\langle{#1},{#2}\rangle}
\newcommand{\T}{\mathrm{T}}
\newcommand{\Rd}{{\mathbb{R}^{d}}}
\newcommand{\reals}{{\mathbb{R}}}
\newcommand{\Rn}{{\mathbb{R}^n}}
\newcommand{\Cn}{\mathbb{C}^n}
\newcommand{\Cnn}{\mathbb{C}^{n\times n}}
\newcommand{\diag}{\mathrm{diag}}
\newcommand{\calC}{\mathcal{C}}
\newcommand{\calM}{\mathcal{M}}
\newcommand{\rank}{\operatorname{rank}}
\newcommand{\opnorm}[1]{\left\|{#1}\right\|_\mathrm{op}}
\newcommand{\frobnormsmall}[1]{\|{#1}\|_\mathrm{F}}
\newcommand{\lambdamax}{\lambda_\mathrm{max}}
\newcommand{\lambdamin}{\lambda_\mathrm{min}}
\newcommand{\ddiag}{\mathrm{ddiag}}
\newtheorem{theorem}{Theorem}
\newtheorem{lemma}[theorem]{Lemma}
\newtheorem{remark}{Remark}
\title{Nonconvex phase synchronization}
\author{Nicolas Boumal\thanks{Department of Mathematics, Princeton University, \texttt{nboumal@math.princeton.edu}.}}
\begin{document}

\maketitle

\begin{abstract}
We estimate $n$ phases (angles) from noisy pairwise relative phase measurements. The task is modeled as a nonconvex least-squares optimization problem. It was recently shown that this problem can be solved in polynomial time via convex relaxation, under some conditions on the noise. In this paper, under similar but more restrictive conditions, we show that a modified version of the power method converges to the global optimum. This is simpler and (empirically) faster than convex approaches. Empirically, they both succeed in the same regime. Further analysis shows that, in the same noise regime as previously studied, second-order necessary optimality conditions for this quadratically constrained quadratic program are also sufficient, despite nonconvexity.
\end{abstract}

\section{Introduction}

We consider the problem of estimating $n$ angles $\theta_1, \ldots, \theta_n$ in $[0, 2\pi)$ based on noisy measurements of their differences $\theta_i - \theta_j \textrm{ mod } 2\pi$. Equivalently, we aim to recover $n$ phases $e^{i\theta_1}, \ldots, e^{i\theta_n}$ from measurements of relative phases $e^{i(\theta_i - \theta_j)}$. 
This situation comes up notably in clock synchronization of distributed networks and signal reconstruction from phaseless measurements---see the \emph{related work} section below for references and more examples.

The target parameter is
\begin{align}
	z & \in \Cn_1 \triangleq \{ x \in \Cn : |x_1| = \cdots = |x_n| = 1 \}.
	\label{eq:Cn1}
\end{align}
Writing $\bar z_j = e^{-i\theta_j}$ for the complex conjugate of $z_j = e^{i\theta_j}$, the measurements are of the form $C_{ij} \approx z_i \bar z_j = e^{i(\theta_i - \theta_j)}$. They are stored in the Hermitian matrix
\begin{align}
	C & = zz^* + \Delta
	\label{eq:C}
\end{align}
where $z^*$ is the Hermitian conjugate of $z$ and $\Delta$ is a Hermitian perturbation. Motivated by the scenario where $\Delta$ contains white Gaussian noise, we focus on the associated maximum likelihood estimation problem, which corresponds to the least-squares estimator in the nonlinear space $\Cn_1$. Writing $\frobnormsmall{\cdot}$ for the Frobenius norm, this reads:
\begin{align*}
	\min_{x\in\Cn_1} \frobnormsmall{C - xx^*}^2.
\end{align*}
The Frobenius norm expands as $\frobnormsmall{C - xx^*}^2 = \frobnormsmall{C}^2 + \frobnormsmall{xx^*}^2 - 2 x^* C x$. Under the constraints, the first two terms are constant so that the problem is equivalent to our object of study:
\begin{align}
	\max_{x\in\Cn_1} f(x) = x^*Cx.
	\tag{P}
	\label{eq:P}
\end{align}
This is a smooth optimization problem on a manifold (a product of $n$ circles in the complex plane)~\citep{AMS08}. It is nonconvex and NP-hard~\cite[Prop.\,3.5]{zhang2006complex}.

In this paper, we study necessary and sufficient optimality conditions for~\eqref{eq:P} and propose a simple method which converges to a global optimum, under conditions on $\Delta$~\eqref{eq:C}. These conditions are met with high probability in the white Gaussian noise scenario, provided the variance is not too large.

Since measurements convey only relative information, the \emph{global phase} of $z$ is unidentifiable, that is, it cannot be known whether the sought signal is $z$ or $ze^{i\theta}$. This is reflected in the invariance $f(x) = f(xe^{i\theta})$ for all $\theta$. Accordingly, we define an equivalence relation $\sim$ over $\Cn_1$:
\begin{align}
	x \sim y \iff x = ye^{i\theta} \textrm{ for some } \theta \iff |x^*y| = n.
	\label{eq:equivalence}
\end{align}
This equivalence relation partitions $\Cn_1$ in subsets of indistinguishable signals called \emph{equivalence classes}. The equivalence class of $x$ is $[x] = \{ xe^{i\theta} \textrm{ for all } \theta \}$. The set of equivalence classes is the \emph{quotient space} $\Cn_1 /\!\! \sim$.
An adequate error measure (or distance on the quotient space) is
\begin{align}
	d(z, x) & = \min_{\theta \in \reals}\|xe^{i \theta}-z\|_2 = \sqrt{2(n-|z^*x|)}.
	\label{eq:distance}
\end{align}
When we say that the solution of~\eqref{eq:P} is unique \emph{up to phase}, we mean that the set of global optima is one equivalence class.

The following result from~\cite{bandeira2014tightness} states that global optima of~\eqref{eq:P} are close to the unknown signal $z$ (as compared to the maximal distance $\sqrt{2n}$) if
the perturbation $\Delta$ is small compared to the signal, in operator norm $\opnorm{\cdot}$ (largest singular value)---$\opnorm{\Delta} \ll \opnorm{zz^*} = n$.
\begin{lemma}\label{lemma:ell2bound}
	If $x\in\Cn$ verifies $\|x\|_2^2 = n$ and $x^*Cx \geq z^*Cz$ (in particular, if $x$ is a global optimum of~\eqref{eq:P}), then
	\begin{align*}
        d(z, x) &
		\leq 4\frac{\opnorm{\Delta}}{\sqrt{n}}.
	\end{align*}
\end{lemma}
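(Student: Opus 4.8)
The plan is to turn the hypothesis $x^*Cx \ge z^*Cz$, with $C = zz^* + \Delta$, into a single scalar inequality in the quantity $d := d(z,x)$, and then squeeze out the constant by a short bootstrap. First I would remove the global phase: choose $\theta^\star$ so that $\hat x := e^{i\theta^\star}x$ satisfies $z^*\hat x = |z^*x| \ge 0$. By phase invariance $\hat x^*C\hat x = x^*Cx$, and $\|\hat x\|_2^2 = n$, so $\hat x$ still satisfies the hypothesis; moreover, setting $e := \hat x - z$, the identity $\|\hat x - z\|_2^2 = 2n - 2|z^*x|$ together with \eqref{eq:distance} gives $\|e\|_2 = d$. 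Plugging $C = zz^* + \Delta$ into $\hat x^*C\hat x \ge z^*Cz$ and using $|z^*\hat x|^2 = |z^*x|^2$ and $\|z\|_2^2 = n$, I get
\[
n^2 - |z^*x|^2 \;\le\; \hat x^*\Delta\hat x - z^*\Delta z ,
\]
whose left-hand side equals $nd^2 - \tfrac14 d^4$ because $|z^*x| = n - \tfrac12 d^2$, hence $n^2-|z^*x|^2 = (n-|z^*x|)(n+|z^*x|) = \tfrac12 d^2\,(2n - \tfrac12 d^2)$.

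For the right-hand side I would use two estimates. The blunt one, $\hat x^*\Delta\hat x - z^*\Delta z \le \opnorm{\Delta}\,(\|\hat x\|_2^2 + \|z\|_2^2) = 2n\,\opnorm{\Delta}$, combined with $d^2 \le 2n$ (so that $\tfrac14 d^4 \le \tfrac12 n d^2$), already yields the crude bound $d^2 \le 4\,\opnorm{\Delta}$. The refined one expands in $e$: since $\Delta$ is Hermitian, $\hat x^*\Delta\hat x - z^*\Delta z = 2\,\mathrm{Re}(z^*\Delta e) + e^*\Delta e$, which by Cauchy--Schwarz, $\opnorm{\Delta}$, and $\|e\|_2 = d$ is at most $2\sqrt n\,\opnorm{\Delta}\,d + \opnorm{\Delta}\,d^2$. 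Thus
\[
nd^2 - \tfrac14 d^4 \;\le\; 2\sqrt n\,\opnorm{\Delta}\,d + \opnorm{\Delta}\,d^2 .
\]

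I would then conclude by splitting on the size of $\opnorm{\Delta}$. If $\opnorm{\Delta} \ge n/4$, the crude bound suffices by itself: $d \le 2\sqrt{\opnorm{\Delta}} \le 4\,\opnorm{\Delta}/\sqrt n$, the last step because $\sqrt n \le 2\sqrt{\opnorm{\Delta}}$. If $\opnorm{\Delta} < n/4$, I feed the crude bound $d^2 \le 4\,\opnorm{\Delta}$ back into the quartic term: $\tfrac14 d^4 \le \opnorm{\Delta}\,d^2$, so the left-hand side of the refined inequality is at least $(n - \opnorm{\Delta})\,d^2$; cancelling one factor of $d$ (the case $d = 0$ being trivial) leaves $(n - 2\,\opnorm{\Delta})\,d \le 2\sqrt n\,\opnorm{\Delta}$, and since $n - 2\,\opnorm{\Delta} > n/2$ this gives $d < 4\,\opnorm{\Delta}/\sqrt n$.

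I expect the only real subtlety to be the constant. Using the refined inequality once with just $d^2 \le 2n$ gives only $d \le 8\,\opnorm{\Delta}/\sqrt n$; it is the bootstrap---using the crude bound to control $d^4$---that recovers the factor $4$, and the case split is needed precisely because the refined step requires the sign $n - 2\,\opnorm{\Delta} > 0$, which fails in the large-noise regime that the crude bound handles instead. Everything else reduces to Cauchy--Schwarz and the distance identity \eqref{eq:distance}.
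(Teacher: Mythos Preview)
Your argument is correct. The paper itself does not prove this lemma: it simply cites \cite[Lemma~4.1]{bandeira2014tightness}, so there is no in-paper proof to compare against. Your write-up is thus strictly more self-contained than what appears here. The route you take---fix the global phase, expand $C = zz^* + \Delta$ to get $nd^2 - \tfrac14 d^4 \le \hat x^*\Delta\hat x - z^*\Delta z$, then bound the right-hand side both crudely (to get $d^2 \le 4\opnorm{\Delta}$) and via the expansion in $e = \hat x - z$---is standard and clean; the bootstrap that feeds $d^2 \le 4\opnorm{\Delta}$ back into the quartic term to recover the constant $4$ (rather than $8$), together with the case split at $\opnorm{\Delta} = n/4$ to handle the sign of $n - 2\opnorm{\Delta}$, is exactly the right trick. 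I checked the arithmetic in both branches and it holds.
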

\begin{proof}
	See~\cite[Lemma 4.1]{bandeira2014tightness}.
\end{proof}
Under the white Gaussian noise scenario, the Cram\'er--Rao bound for phase synchronization~\cite{crbsynch,howard2010estimation} states that no unbiased estimator for $z$ based on $C$ can have expected squared error lower than $c\opnorm{\Delta}^2/n$, for some constant $c$. Thus, the maximum likelihood estimator (MLE) is order optimal. Similar information theoretic bounds applied to phase synchronization can be found in~\cite{javanmard2015phase}.

The latter result is motivation to compute global optima of~\eqref{eq:P}.\footnote{As shown in Section~\ref{sec:eig}, the simple \emph{eigenvector estimator} is statistically almost as good as the MLE. In this paper, we focus on the optimization problem~\eqref{eq:P} to obtain the actual MLE.} In general, this is NP-hard~\cite[Prop.\,3.5]{zhang2006complex}. In fact, even checking whether a candidate optimum is but a local optimum could be NP-hard in general, as is the case for nonconvex quadratic programming~\citep[\S5.1]{vavasis1991nonlinear}. Fortunately, for~\eqref{eq:P}, global optimality can sometimes be certified through the Hermitian matrix
\begin{align}
	S = S(x) = \Re\{\ddiag(Cxx^*)\} - C,
	\label{eq:S}
\end{align}
where $\ddiag \colon \Cnn \to \Cnn$ zeroes out all off-diagonal entries of a matrix~\citep{bandeira2014tightness}. This is captured in the following lemma.
\begin{lemma} \label{lemma:sufficientS}
	Let $x_\mathrm{opt}$ be globally optimal for~\eqref{eq:P}. For any $x\in\Cn_1$, the optimality gap at $x$ is bounded as
	\begin{align}
		0 \quad \leq \quad f(x_\mathrm{opt}) - f(x) \quad \leq \quad -n\lambdamin\big(S(x)\big).
		\label{eq:boundsvalueP}
	\end{align}
	In particular, if $S(x) \succeq 0$, then $x$ is globally optimal for~\eqref{eq:P}. If furthermore we have $\rank(S(x)) = n-1$, then the global optimum is unique (up to phase).
\end{lemma}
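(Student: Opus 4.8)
The plan is to derive all three claims from a single algebraic identity relating the quadratic form of $S(x)$ to the objective.

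\emph{The key identity.} I would first check that for every $y \in \Cn_1$,
\[
  y^* S(x)\, y \;=\; f(x) - f(y) .
\]
This holds because the ``correction'' term contributes a constant on $\Cn_1$: the $j$-th diagonal entry of $\ddiag(Cxx^*)$ is $(Cx)_j\bar x_j$, so $\Re\{\ddiag(Cxx^*)\}$ is a real diagonal matrix whose trace is $\Re\{\trace(Cxx^*)\} = \Re\{x^*Cx\} = f(x)$; since $|y_j| = 1$ for all $j$, the Rayleigh quotient of a diagonal matrix against $y$ equals its trace, giving $y^*\Re\{\ddiag(Cxx^*)\}\,y = f(x)$. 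Subtracting $y^*Cy = f(y)$ yields the identity. Note in particular that $y = x$ gives $x^* S(x)\, x = 0$.

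\emph{The gap bounds and sufficiency.} The lower bound $0 \le f(x_\mathrm{opt}) - f(x)$ is just the global optimality of $x_\mathrm{opt}$. For the upper bound, apply the identity with $y = x_\mathrm{opt}$ and bound below by the smallest eigenvalue, using $\|x_\mathrm{opt}\|_2^2 = n$:
\[
  n\,\lambdamin\!\big(S(x)\big) \;\le\; x_\mathrm{opt}^* S(x)\, x_\mathrm{opt} \;=\; f(x) - f(x_\mathrm{opt}),
\]
so $f(x_\mathrm{opt}) - f(x) \le -n\,\lambdamin(S(x))$. If $S(x) \succeq 0$, then $\lambdamin(S(x)) \ge 0$, the upper bound becomes $\le 0$, and combined with the lower bound we get $f(x) = f(x_\mathrm{opt})$: $x$ is globally optimal.

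\emph{Uniqueness.} Assume in addition $\rank(S(x)) = n-1$, so $S(x)$ is positive semidefinite with a one-(complex-)dimensional kernel. From $x^* S(x)\, x = 0$ and $S(x) \succeq 0$ we get $S(x)\, x = 0$, hence $\ker S(x) = \spann\{x\}$ (over $\complex$). Now let $y \in \Cn_1$ be any global optimum. Since $x$ is also globally optimal, $f(y) = f(x)$, so the identity gives $y^* S(x)\, y = 0$, hence $S(x)\, y = 0$ again by semidefiniteness, hence $y = cx$ for some $c \in \complex$. The constraints $|y_j| = |x_j| = 1$ force $|c| = 1$, i.e.\ $y = xe^{i\theta}$, so $y \sim x$. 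Therefore the set of global optima equals the single equivalence class $[x]$.

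\emph{Main obstacle.} There is no real difficulty; the only point requiring care is the computation in the first step --- correctly identifying the diagonal of $\ddiag(Cxx^*)$ and using $|x_j| = 1$ to see that the relevant trace is exactly $f(x)$. The rest uses only the Rayleigh-quotient inequality and the standard fact that a positive semidefinite matrix $A$ satisfies $v^*Av = 0 \iff Av = 0$.
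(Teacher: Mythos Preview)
Your proof is correct and follows essentially the same route as the paper: establish the identity $y^*S(x)y = f(x) - f(y)$ for all $y\in\Cn_1$, then apply the Rayleigh-quotient bound with $y=x_{\mathrm{opt}}$. Your uniqueness argument is in fact more complete than the paper's, which merely cites \cite[\S4.2]{bandeira2014tightness} for that part.
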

\begin{proof}
	Recall the definition of $f(x) = x^*Cx$. For all $y\in\Cn_1$, it holds that
	\begin{align*}
		n \cdot \lambdamin(S(x)) \leq y^*S(x)y = y^* \Re\{\ddiag(Cxx^*)\} y - y^* C y = x^*Cx - y^*Cy.
	\end{align*}
	Take $y=x_\mathrm{opt}$ to establish~\eqref{eq:boundsvalueP}. For uniqueness, see~\cite[\S4.2]{bandeira2014tightness}.
\end{proof}
Note that this optimality condition is only sufficient: in general, $S(x)$ may be indefinite even at a globally optimal $x$. The theorems in this paper identify and benefit from a regime where the condition is also necessary.

When $S$ is positive semidefinite, it is optimal for the Lagrangian dual of~\eqref{eq:P}, whose dual (the bi-dual of~\eqref{eq:P}) is the classical semidefinite relaxation of~\eqref{eq:P} obtained by lifting. Thus, $S$ as a certificate is most useful when that semidefinite relaxation is tight, which is the topic of~\cite{bandeira2014tightness}. When the semidefinite relaxation is tight, the global optimum of~\eqref{eq:P} can be computed in polynomial time by solving the associated semidefinite program. Unfortunately, in practice, this is typically slow because the relaxation involves lifting the problem to a (much) higher dimensional space ($O(n^2)$ compared to $O(n)$).

In this paper, we show that a simple \emph{generalized power method} (GPM, Algorithm~\ref{algo:ppm}, Section~\ref{sec:ppm}) in the low-dimensional space $\Cn_1$ converges to the global optimum of~\eqref{eq:P} (unique up to phase). This is under some conditions on $\Delta$ and provided GPM is adequately initialized, for example via the \emph{eigenvector method} (Section~\ref{sec:eig}). The method alternates between applying $C + \alpha I_n$ to the current iterate (note the extra inertia term $\alpha$) and projecting to the nonconvex set $\Cn_1$. The main result follows. (Setting $\alpha = \max(0, -\lambdamin(C))$---the smallest allowed value in Algorithm~\ref{algo:ppm}---is practical and satisfies the assumption below.)
\begin{theorem}\label{thm:ppmglobalopt}
	If $\opnorm{\Delta} \leq \frac{1}{28}n^{2/3}$ and $\|\Delta z\|_\infty \leq \frac{1}{28}n^{2/3}\sqrt{\log n}$, and if $\alpha \leq \opnorm{\Delta}$, then the iterates produced by Algorithm~\ref{algo:ppm} converge to
	the unique global optimum of~\eqref{eq:P}, considered in the quotient space $\Cn_1/\!\!\sim$ (that is, up to phase).

\end{theorem}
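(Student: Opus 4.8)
The plan is to recast the generalized power method as a monotone fixed-point iteration, trap its iterates in a small neighborhood of $z$, characterize the limit points as first-order critical points of~\eqref{eq:P}, and finally certify — via the matrix $S$ of Lemma~\ref{lemma:sufficientS} — that the only critical point in that neighborhood is the global optimum. Write $M = C + \alpha I_n$; since Algorithm~\ref{algo:ppm} forces $\alpha \geq \max(0, -\lambdamin(C))$, $M \succeq 0$, and the iteration reads $x^{(k+1)} = T(x^{(k)})$ with $T(x) = \mathcal{P}(Mx)$, where $\mathcal{P}$ rescales every nonzero entry to unit modulus. Because $\|x\|_2^2 = n$ on $\Cn_1$, maximizing $x^*Mx$ is equivalent to~\eqref{eq:P}, and whenever $Mx$ has no zero entry $T(x)$ is the \emph{unique} maximizer of $w\mapsto\Re(w^*Mx)$ over $\Cn_1$. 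Expanding $T(x)^*MT(x)$ around $x$ and combining $M \succeq 0$ with $\Re(T(x)^*Mx) \geq \Re(x^*Mx) = x^*Mx$ gives the ascent identity
\begin{align*}
 f(T(x)) - f(x) \;=\; (T(x)-x)^*M(T(x)-x) \;+\; 2\big(\Re(T(x)^*Mx) - x^*Mx\big) \;\geq\; 0,
\end{align*}
with equality forcing $x$ to maximize $w\mapsto\Re(w^*Mx)$, hence $T(x) = x$ when $Mx$ has no zero entry. So $f(x^{(k)})$ is nondecreasing and converges; the bounded sequence has accumulation points, and any accumulation point $x^\infty$ satisfies $f(T(x^\infty)) = \lim_k f(x^{(k)}) = f(x^\infty)$, so $x^\infty = T(x^\infty)$ as soon as $Mx^\infty$ has no zero entry. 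A fixed point obeys $Cx^\infty = (\diag(\rho) - \alpha I)x^\infty$ with $\rho_i = |(Mx^\infty)_i| > 0$, i.e.\ it is a first-order critical point of~\eqref{eq:P}.

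Next, trap the iterates. Put $\mathcal{B} = \{x\in\Cn_1 : f(x) \geq f(x^{(0)})\}$; by monotonicity $x^{(k)}\in\mathcal{B}$ for all $k$ and $T(\mathcal{B})\subseteq\mathcal{B}$. A Davis--Kahan-type bound (the spectral gap of $C$ is at least $n - 2\opnorm{\Delta} > 0$) shows the eigenvector initialization obeys $d(z, x^{(0)}) = O(\opnorm{\Delta}/\sqrt n)$, hence $f(z) - f(x^{(0)}) = O(\opnorm{\Delta}^2)$ (cf.\ Section~\ref{sec:eig}); feeding this slack into the argument behind Lemma~\ref{lemma:ell2bound} gives $\mathcal{B}\subseteq\{x : d(z,x)\leq c\,\opnorm{\Delta}/\sqrt n\}$ for an absolute constant $c$. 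On $\mathcal{B}$, writing $x = ze^{i\phi} + e$ with $\|e\|_2 = d(z,x)$ and $s = z^*x$ (so $|s| = n - d(z,x)^2/2$), one has $(Mx)_i = z_i s + e^{i\phi}(\Delta z)_i + (\Delta e)_i + \alpha x_i$, whence $|(Mx)_i| \geq |s| - \|\Delta z\|_\infty - \opnorm{\Delta}\|e\|_2 - \alpha \geq n - O(n^{5/6}) > 0$ under the hypotheses. Thus $Mx$ never vanishes on $\mathcal{B}$, $T$ is continuous there, the equality case above applies, and every accumulation point of the GPM sequence is a first-order critical point lying in $\mathcal{B}$.

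The crux is to show every fixed point $x\in\mathcal{B}$ of $T$ is globally optimal. There $(Cx)_i = (\rho_i - \alpha)x_i$, so $(Cx)_i\bar x_i = \rho_i - \alpha\in\reals$, giving $\Re\{\ddiag(Cxx^*)\} = \diag(\rho) - \alpha I$ and therefore $S(x) = \diag(\rho) - M$; in particular $S(x)x = 0$. For $v\perp x$, using $z = e^{-i\phi}(x-e)$ one gets $|z^*v|\leq\|e\|_2\|v\|_2$, while $\min_i\rho_i \geq |s| - \|\Delta z\|_\infty - \opnorm{\Delta}\|e\|_2 - \alpha$, so
\begin{align*}
 v^*S(x)v \;\geq\; \Big( n - \tfrac{3}{2}d(z,x)^2 - \|\Delta z\|_\infty - \opnorm{\Delta}\,d(z,x) - \opnorm{\Delta} - 2\alpha \Big)\|v\|_2^2 .
\end{align*}
Substituting $d(z,x)\leq c\,\opnorm{\Delta}/\sqrt n$, $\opnorm{\Delta}\leq\tfrac{1}{28}n^{2/3}$, $\|\Delta z\|_\infty\leq\tfrac{1}{28}n^{2/3}\sqrt{\log n}$ and $\alpha\leq\opnorm{\Delta}$, the bracket is $n - O(n^{5/6}) > 0$. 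Since $S(x)x = 0$ and $S(x)$ is positive definite on $x^\perp$, $S(x)\succeq 0$ with $\rank S(x) = n-1$; by Lemma~\ref{lemma:sufficientS}, $x$ is globally optimal and the optimum of~\eqref{eq:P} is unique up to phase, so $x\in[x_\mathrm{opt}]$. (The same estimate at $x_\mathrm{opt}$, which lies in $\mathcal{B}$ because $f(x_\mathrm{opt}) = \max f \geq f(x^{(0)})$, reconfirms this.) Hence all accumulation points of $\big([x^{(k)}]\big)$ in the compact quotient metric space $\Cn_1/\!\!\sim$ equal $[x_\mathrm{opt}]$, and a sequence in a compact metric space with a unique accumulation point converges to it; this proves the theorem.

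The main obstacle is the third step: making $\lambdamin(S(x))$ provably positive forces one to weigh the signal magnitude ($\approx n$) against every noise term, and this is exactly where the three hypotheses are consumed — $\opnorm{\Delta}\lesssim n^{2/3}$ controls the operator-norm contributions and, via $d(z,x)\lesssim\opnorm{\Delta}/\sqrt n$, the term $\opnorm{\Delta}\,d(z,x) = O(n^{5/6})$; $\|\Delta z\|_\infty\lesssim n^{2/3}\sqrt{\log n}$ controls the entrywise diagonal correction $\ddiag(\Delta x x^*)$ through $\min_i\rho_i$; and $\alpha\leq\opnorm{\Delta}$ keeps the inertia from swamping the estimate (too large an $\alpha$ would also stall the iteration, since $T(x)\approx x$ when $M\approx\alpha I$). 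A secondary, more routine difficulty is the initialization bookkeeping needed to place $x^{(0)}$ within $O(\opnorm{\Delta}/\sqrt n)$ of $z$ — which is what keeps $\mathcal{B}$ inside the right ball and $Mx$ bounded away from $0$ throughout, making $T$ continuous on $\mathcal{B}$ and the ascent strict off the fixed-point set.
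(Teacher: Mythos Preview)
Your proof is correct and takes a somewhat different route from the paper's. Both arguments share the same skeleton---monotone ascent, accumulation points are fixed points, fixed points near $z$ satisfy $S(x)\succeq 0$ with rank $n-1$, then convergence follows by compactness of the quotient---but they differ in how the relevant fixed points are localized near $z$.

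The paper proceeds via a dichotomy: using the fixed-point identity $x^*\tilde C x = \|\tilde C x\|_1$ (Lemma~\ref{lem:fixedpointl1norm}), it shows (Lemma~\ref{lem:fixedpointsgoodbad}) that every fixed point of $T$ is either ``good'' ($|z^*x|\geq n-8\opnorm{\Delta}$) or ``bad'' ($|z^*x|\leq 8\opnorm{\Delta}$), then compares cost values to show the eigenvector initialization outperforms all bad fixed points (Lemma~\ref{lem:accumptsaregoodfixedpts}). This yields only $d(z,x)\leq 4\sqrt{\opnorm{\Delta}}$ for accumulation points. You instead bound the entire super-level set $\mathcal{B}=\{f\geq f(x^{(0)})\}$ directly, by combining $f(z)-f(x^{(0)})=O(\opnorm{\Delta}^2)$ with a refinement of the argument behind Lemma~\ref{lemma:ell2bound}; this gives the sharper $d(z,x)=O(\opnorm{\Delta}/\sqrt{n})$ for \emph{all} iterates, not just accumulation points. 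Your route avoids the dichotomy lemma entirely and yields tighter intermediate estimates---in particular the cross term $\opnorm{\Delta}\,d(z,x)$ in the $S\succeq 0$ computation becomes $O(n^{5/6})$ rather than $O(n)$---so the constant $1/28$ is comfortably achieved. The trade-off is that you lean on the proof technique of Lemma~\ref{lemma:ell2bound} (which is external to this paper) and track constants only implicitly via $O(\cdot)$; a careful reader would need to carry out the quadratic-inequality computation to confirm that $1/28$ suffices for all $n$.
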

The bound on $\opnorm{\Delta}$ conservatively ensures the perturbation $\Delta$ cannot contain a competing signal $yy^*$ strong enough to overshadow $zz^*$. The bound on $\|\Delta z\|_\infty$ ensures no row of $\Delta$ aligns strongly with $z$, as otherwise $\Delta$ could reduce the signal strength by including a component aligned with $-zz^*$. If the perturbation is independent of the signal, the bound on $\|\Delta z\|_\infty$ is not expected to be the bottleneck. If the perturbation is a function of $z$, which may be the case in certain applications, a different analysis may be needed. For example, see~\citep{bandeira2012cheeger} for a treatment of adversarial noise.

In Section~\ref{sec:landscape}, we further explore the landscape of~\eqref{eq:P} independently of GPM, and find that, in the same noise regime, second-order necessary optimality conditions are also sufficient. Thus, regardless of initialization, any algorithm for~\eqref{eq:P} which converges to a point satisfying first- and second-order necessary optimality conditions converges to a global optimum. This is the case for the Riemannian trust-region method for example (RTR)~\citep{genrtr,boumal2016globalrates}. (The condition on the diagonal of $C$ is harmless, since changing the diagonal of $C$ only offsets the cost $f$ by a constant on $\Cn_1$.)
\begin{theorem}\label{thm:socpglobalopt}
	If $\diag(C) \geq 0$, $\opnorm{\Delta} \leq \frac{1}{14}n^{2/3}$ and $\|\Delta z\|_\infty \leq \frac{1}{14}n^{2/3}\sqrt{\log n}$, then all second-order critical points of~\eqref{eq:P} are global optima of~\eqref{eq:P} (unique up to phase).
\end{theorem}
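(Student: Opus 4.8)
The plan is to characterize the second-order critical points of~\eqref{eq:P} through the Hermitian matrix $S(x)$ and then prove $S(x)\succeq 0$ with $\rank S(x)=n-1$, which by Lemma~\ref{lemma:sufficientS} forces $x$ to be the unique (up to phase) global optimum. Regarding~\eqref{eq:P} as optimization over the Riemannian submanifold $\Cn_1$ of $\Cn$, the tangent space at $x$ is $\{i\diag(c)x:c\in\reals^n\}$. A first-order critical point obeys $Cx=\diag(\nu)x$ with real $\nu_j=\Re(\bar x_j(Cx)_j)$, so $S(x)x=0$ and $f(x)=\sum_j\nu_j$; at such a point the Riemannian Hessian quadratic form is $\dot x\mapsto-2\,\dot x^*S(x)\dot x$, so the second-order necessary condition for a maximum is $\dot x^*S(x)\dot x\ge0$ for all tangent $\dot x$, i.e.\ $\diag(\nu)-\Re(\diag(\bar x)C\diag(x))\succeq0$ over $\reals$. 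Replacing $x$ by $xe^{i\theta}$ changes none of these objects, so assume $\beta:=z^*x=|z^*x|\ge0$, giving $d(z,x)^2=2(n-\beta)$; write $z_j\bar x_j=e^{i\phi_j}$, $c_j=\cos\phi_j$, $s_j=\sin\phi_j$, so that $\sum_j c_j=\beta$, $\|c\|_2^2+\|s\|_2^2=n$, and $\Re(\diag(\bar x)zz^*\diag(x))=cc\transpose+ss\transpose$.

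First I would extract the cheap consequences of second-order criticality. Testing the Hessian on the coordinate direction $\dot x=ix_je_j$ gives $S(x)_{jj}\ge0$, i.e.\ $\nu_j\ge\diag(C)_j\ge0$ by the hypothesis on $\diag(C)$. Testing the matrix inequality $\diag(\nu)\succeq cc\transpose+ss\transpose+\Re(\diag(\bar x)\Delta\diag(x))$ against a normalized sign vector $w=\sigma/\sqrt n$, with $\sigma=\sign(c)$ or $\sign(s)$ according to whether $\|c\|_2\ge\|s\|_2$ or not (using $\|c\|_1\ge\|c\|_2^2$, $\opnorm{\Re(\diag(\bar x)\Delta\diag(x))}\le\opnorm{\Delta}$, and $\max(\|c\|_2^2,\|s\|_2^2)\ge n/2$), gives $\tfrac1n f(x)+\opnorm{\Delta}\ge n/4$, hence $f(x)\ge n^2/4-n\opnorm{\Delta}$; with $f(x)=\beta^2+x^*\Delta x\le\beta^2+n\opnorm{\Delta}$ this yields a crude bound $\beta\gtrsim n/2$.

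The heart of the proof is to upgrade this to $\beta=n-O(\opnorm{\Delta})$. From first-order criticality, $\Delta x=\diag(\nu)x-\beta z$, so $|(\Delta x)_j|^2=\nu_j^2-2\beta\nu_jc_j+\beta^2=(\nu_j-\beta c_j)^2+\beta^2s_j^2$. Using $\nu_j\ge0$ and $\beta>0$: (i) $\beta^2s_j^2\le|(\Delta x)_j|^2$, and (ii) if $c_j<0$ then $|(\Delta x)_j|^2\ge\nu_j^2+\beta^2\ge\beta^2$. Summing and invoking $\sum_j|(\Delta x)_j|^2=\|\Delta x\|_2^2\le n\opnorm{\Delta}^2$ together with $\beta\gtrsim n/2$, both $\|s\|_2^2$ and the number of indices with $c_j<0$ are $O(\opnorm{\Delta}^2/n)$; since $c_j=\sqrt{1-s_j^2}\ge1-s_j^2$ at the remaining indices, $\beta=\sum_jc_j\ge n-O(\opnorm{\Delta}^2/n)=n-O(\opnorm{\Delta})$, so $d(z,x)^2=O(\opnorm{\Delta})$. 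This $\ell_2$ bound gives $\|\Delta(x-z)\|_\infty\le\opnorm{\Delta}\,d(z,x)=O(\opnorm{\Delta}^{3/2})$, hence $\|\Delta x\|_\infty\le\|\Delta z\|_\infty+O(\opnorm{\Delta}^{3/2})$, which under the hypotheses is below $n$ by a constant multiple of $n$. Feeding this back: if $c_j<0$ then $\beta\le|(\Delta x)_j|\le\|\Delta x\|_\infty$, contradicting $\beta=n-O(\opnorm{\Delta})$; so every $c_j>0$, and then $|s_j|\le|(\Delta x)_j|/\beta$ forces $|x_j-z_j|^2=2(1-c_j)\le2s_j^2\le2\|\Delta x\|_\infty^2/\beta^2$, so $\|x-z\|_\infty$ is small.

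Finally, for $v\in\Cn$ with $v^*x=0$ and $\|v\|_2=1$, write $v^*S(x)v=\sum_j\nu_j|v_j|^2-|z^*v|^2-v^*\Delta v$. From $\nu_j=\beta c_j+\Re(\bar x_j(\Delta x)_j)$ and the bounds just obtained, $\sum_j\nu_j|v_j|^2\ge\beta\big(1-\tfrac12\|x-z\|_\infty^2\big)-\|\Delta x\|_\infty$; since $v\perp x$, $|z^*v|^2\le n-\beta^2/n=O(\opnorm{\Delta})$; and $v^*\Delta v\le\opnorm{\Delta}$. Substituting $\beta=n-O(\opnorm{\Delta})$, $\|x-z\|_\infty$ small, and $\|\Delta x\|_\infty$ small relative to $n$ makes every such $v^*S(x)v$ strictly positive---this is where the numerical constant $1/14$ and the precise forms of the bounds on $\opnorm{\Delta}$ and $\|\Delta z\|_\infty$ enter. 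Hence $S(x)\succeq0$ with kernel exactly $\spann\{x\}$, so $\rank S(x)=n-1$, and Lemma~\ref{lemma:sufficientS} concludes. I expect the localization step (third paragraph) to be the real obstacle: the generic second-order inequality only delivers $\beta\gtrsim n/2$, and the key non-obvious ingredient is the exact identity $\Delta x=\diag(\nu)x-\beta z$, which converts the global bound $\|\Delta x\|_2\le\sqrt n\,\opnorm{\Delta}$ into entrywise control of the $s_j$ and of antipodal coordinates; the remaining bootstrap and constant-chasing are routine but fiddly.
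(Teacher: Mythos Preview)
Your proposal is correct, and the overall architecture (characterize second-order critical points via $S(x)$, localize near $z$, then show $S(x)\succeq 0$ with rank $n-1$) matches the paper. The final step is essentially identical to the paper's: for $v\perp x$ one expands $v^*S(x)v$, bounds $|z^*v|^2$ by $\|z-x\|_2^2$ (or your equivalent $n-\beta^2/n$), and invokes $\|\Delta x\|_\infty\le\|\Delta z\|_\infty+\opnorm{\Delta}\|x-z\|_2$ to close; this is precisely where the $n^{2/3}$ scaling and the constant $1/14$ enter.

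The localization step, however, takes a genuinely different route. The paper does not use your entrywise identity $\Delta x=\diag(\nu)x-\beta z$. Instead, from $\diag(C)\ge 0$ and second-order criticality it derives (Lemma~\ref{lem:critsocpproperties}) that $(Cx)_j\bar x_j=|(Cx)_j|$, hence $x^*Cx=\|Cx\|_1$; this single scalar identity yields the quadratic inequality $|z^*x|\,(n-|z^*x|)\le 2n\opnorm{\Delta}$ and thus a clean dichotomy $|z^*x|\ge n-4\opnorm{\Delta}$ or $|z^*x|\le 4\opnorm{\Delta}$ (Lemma~\ref{lem:socpgoodbad}). The bad branch is then eliminated by the same sign-vector tangent test you use (Lemma~\ref{lem:SOCPclosetoz}). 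Your approach reverses the order of operations: you apply the sign-vector test first to get only $\beta\gtrsim n/2$, and then your entrywise identity converts $\|\Delta x\|_2^2\le n\opnorm{\Delta}^2$ into control of $\|s\|_2^2$ and of the antipodal indices, giving $n-\beta=O(\opnorm{\Delta}^2/n)$. This is in fact sharper than the paper's $n-\beta\le 4\opnorm{\Delta}$, and it also delivers the $\ell_\infty$ control $\|x-z\|_\infty=O(\|\Delta x\|_\infty/n)$ that the paper does not extract. The paper's route is shorter and yields tidier explicit constants; yours gives more structural information about the critical point (in particular the entrywise control), at the cost of a two-stage bootstrap.
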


The conditions on $\Delta$ are satisfied with high probability under the white Gaussian noise model, provided the variance is not too large.
\begin{lemma}\label{lemma:Wignernoise}
	If $\{\Delta_{ij}, i < j\}$ are i.i.d.\ complex Gaussian random variables with variance $\sigma^2$,
	$\Delta_{ii} = 0$ and $\Delta_{ji} \triangleq \overline \Delta_{ij}$ for $i < j$, then $\opnorm{\Delta} \leq 3\sigma \sqrt{n}$ and $\|\Delta z\|_\infty \leq 3\sigma \sqrt{n \log n}$ with probability at least $1 - 2n^{-5/4} - e^{-n/2}$.
\end{lemma}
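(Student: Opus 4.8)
The plan is to prove the two bounds separately and then combine them with a union bound over the two failure events.

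\emph{The spectral-norm bound.} Write $\Delta = \sigma W$, so $W$ is a complex Hermitian Gaussian (Wigner) matrix with zero diagonal and i.i.d.\ standard complex Gaussian entries above the diagonal. First I would record the elementary fact that for any unit vector $u \in \Cn$ the quadratic form $u^*\Delta u = \sum_{i<j} 2\Re(\bar u_i u_j \Delta_{ij})$ is a centered real Gaussian of variance $\sigma^2\big(1 - \sum_i |u_i|^4\big) \le \sigma^2$ (the diagonal contributes nothing since $\Delta_{ii}=0$, and the $\binom n2$ summands are independent). Since $\opnorm{\Delta}$ is the supremum of the centered Gaussian process $(\varepsilon,u)\mapsto \varepsilon\, u^*\Delta u$ over $\varepsilon\in\{\pm1\}$ and unit $u$, whose pointwise variance is at most $\sigma^2$, the Borell--TIS inequality gives $\mathbb{P}\big(\opnorm{\Delta} > \expect\,\opnorm{\Delta} + t\big) \le e^{-t^2/(2\sigma^2)}$. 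Combined with the standard bound $\expect\,\opnorm{\Delta} \le 2\sigma\sqrt{n}$ for Gaussian Wigner matrices (obtainable from Slepian's comparison inequality applied to $u\mapsto u^*\Delta u$ on the sphere, or citable from the random-matrix literature), taking $t = \sigma\sqrt{n}$ yields $\opnorm{\Delta} \le 3\sigma\sqrt{n}$ outside an event of probability $e^{-n/2}$.

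\emph{The bound on $\|\Delta z\|_\infty$.} The point is that although $\Delta$ has dependent entries overall, the entries within any fixed row are mutually independent: for row $i$, each $\Delta_{ij}$ with $j>i$ is one of the original i.i.d.\ variables, each $\Delta_{ij}$ with $j<i$ is the complex conjugate of a \emph{distinct} original variable, and conjugation preserves the circularly symmetric complex Gaussian law. Hence for each $i$, the $i$th entry $(\Delta z)_i = \sum_{j \ne i} \Delta_{ij} z_j$ is a sum of $n-1$ independent circularly symmetric complex Gaussians, each of variance $\sigma^2 |z_j|^2 = \sigma^2$, so $(\Delta z)_i$ is circularly symmetric complex Gaussian of variance $(n-1)\sigma^2$. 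Therefore $|(\Delta z)_i|^2/\big((n-1)\sigma^2\big)$ has the $\frac12\chi^2_2$ distribution, whose tail is exactly exponential: $\mathbb{P}\big(|(\Delta z)_i| > 3\sigma\sqrt{n\log n}\big) \le \exp\!\big(-9n\log n/(n-1)\big) \le n^{-9}$. A union bound over the $n$ rows then gives $\|\Delta z\|_\infty \le 3\sigma\sqrt{n\log n}$ outside an event of probability at most $n^{-8}$, which is comfortably within the claimed $2n^{-5/4}$ (the stated constant is not tight). Intersecting the two good events, a final union bound over their complements gives the stated probability $1 - 2n^{-5/4} - e^{-n/2}$.

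\emph{Main obstacle.} Everything here reduces to an exact distributional computation plus elementary Gaussian tail estimates, \emph{except} for the expectation bound $\expect\,\opnorm{\Delta} \le 2\sigma\sqrt{n}$. That is where care is needed: to land exactly on the threshold $3\sigma\sqrt{n}$ with failure probability $e^{-n/2}$ one must have the constant equal to $2$ rather than $2+o(1)$, so I would invoke a sharp comparison inequality (Slepian/Gordon) rather than a lossy $\varepsilon$-net bound; in view of the generous constants appearing elsewhere in the paper, a modest amount of slack in this step would in any case be harmless.
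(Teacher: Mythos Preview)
The paper does not actually prove this lemma: its entire proof reads ``See~\cite[Prop.\,3.3]{bandeira2014tightness}.'' Your proposal therefore goes beyond the paper by supplying a self-contained argument, and that argument is correct and is essentially the standard one that the cited reference also uses: Gaussian concentration (Borell--TIS) around the expectation for $\opnorm{\Delta}$, and an exact exponential tail for each $|(\Delta z)_i|^2$ followed by a union bound over rows.

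Two minor remarks. First, your identification of the bottleneck is accurate: the only step that is not a one-line computation is the non-asymptotic bound $\expect\,\opnorm{\Delta}\le 2\sigma\sqrt{n}$ for the complex Hermitian Gaussian model, and invoking a Slepian/Gordon-type comparison is the right way to get the sharp constant. Second, your tail bound for $\|\Delta z\|_\infty$ is tighter than the stated $2n^{-5/4}$ (you obtain $n^{-8}$); the looser constant in the lemma presumably reflects the specific choices made in the cited proposition rather than any necessity.
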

\begin{proof}
	See \cite[Prop.\,3.3]{bandeira2014tightness}.
\end{proof}
In this Gaussian setting, a global optimum of~\eqref{eq:P} is an MLE for $z$, and the main theorems apply positively for $\sigma = O(n^{1/6})$. In comparison, numerical experiments suggest $\sigma = \tilde{O}(n^{1/2})$ is acceptable---see Section~\ref{sec:XP}. The bottleneck in the analysis is isolated in Lemma~\ref{lem:Deltaxinfty}.

After the first appearance of this paper and partly in response to it, Liu et al.~\citep{liu2016statistical} established the linear convergence rate of GPM, allowing $\sigma$ up to $O(n^{1/4})$. This matches the performance guarantee for the SDP relaxation in~\citep{bandeira2014tightness}.

\subsection*{Related work}

Quoting~\cite{bandeira2014tightness}, ``phase synchronization notably comes up in time-synchronization of distributed networks~\cite{giridhar2006distributed}, signal reconstruction from phaseless measurements~\cite{Alexeev_PhaseRetrievalPolarization,Bandeira_FourierMasks}, ranking~\cite{cucuringu2015syncrank}, digital communications~\cite{So_2010_SDP_detector}, and surface reconstruction problems in computer vision~\cite{agrawal2006surface} and optics~\cite{rubinstein2001optical}. Angular synchronization serves as a model for the more general problem of synchronization of rotations in any dimension, which comes up in structure from motion~\cite{matinec2007rotation,hartley2013rotation}, surface reconstruction from 3D scans~\cite{wang2012LUD} and cryo-electron microscopy~\cite{singer2011eigen}, to name a few.'' Problem~\eqref{eq:P} is also closely related to phase retrieval from short-time Fourier transforms~\citep{bendory2016stft} with applications in ultra-short pulse measurements and ptychography, and to unimodular codes~\citep{soltanalian2014designing}, where algorithms similar to GPM are also studied.

The eigenvector method (Section~\ref{sec:eig}) and the semidefinite relaxation method for synchronization already appear in a study by Singer~\cite{singer2010angular}. After computing either a dominant eigenvector of $C$ or a solution of the semidefinite relaxation of~\eqref{eq:P}, both methods project the relaxed solution to $\Cn_1$. Javanmard et al.~\cite{javanmard2015phase} recently study phase transitions of estimation quality versus noise level for both these estimators (up to the fact that they allow the estimators to not lie in $\Cn_1$ exactly, so that the projection is replaced by a careful scaling). They argue that such estimators compare well to information-theoretic limits, in precisely identified noise regimes. From their own account, the argument---based on tools from statistical mechanics---is non-rigorous. Yet, it produces a useful picture of the situation.

Bandeira et al.~\cite{bandeira2014tightness} consider the case of Gaussian noise more closely, and in particular establish that the semidefinite relaxation method is exact, assuming some bounds on the noise level (corresponding to $\sigma = O(n^{1/4})$ in the context of Lemma~\ref{lemma:Wignernoise}.) That is: the solution of the semidefinite relaxation requires no projection in that setting. Since semidefinite programs (SDP) can be solved in polynomial time, this shows~\eqref{eq:P} is not NP-hard in that noise regime (with high probability).

The present paper completes the picture by proposing to use GPM for the task---a more practical algorithm than SDP solvers---with a proof of convergence to global optima. GPM was introduced by Journ\'ee et al.~\cite{journee2010generalized} to solve problems of the form
\begin{align}
		\max_{x \in \calC} g(x),
		\label{eq:maxCg}
\end{align}
where $g$ is convex (not necessarily differentiable) and $\calC$ is compact. This formalism applies here because, even though $f$ in~\eqref{eq:P} is not convex in general, it can be shifted to a convex function using the inertia term.

GPM iteratively optimizes a linear approximation of $g$ around $x_k$ over $\calC$ to obtain the next iterate $x_{k+1}$. As such, it is a nonconvex instance of the Frank--Wolfe algorithm. Owing to convexity of $g$, this iteration ensures monotonic improvement of the cost---see Lemma~\ref{lem:monotone}. For $\calC$ the unit sphere and $g$ the Rayleigh quotient $x^* A x$, GPM specializes to the classical power method, hence the name. Journ\'ee et al.~\cite{journee2010generalized} analyze this method for sparse PCA applications, where $\calC$ is either a sphere or a Stiefel manifold (set of orthonormal bases). They prove convergence to critical points, but
do not guarantee the quality of the limit points.
The same method also appears in a similar context as~\cite{journee2010generalized} under the name \emph{conditional gradients}~\cite{luss2013conditional}. There too, convergence to fixed points is established, but there is no characterization of the quality of the limit points. In both papers, experiments show the usefulness of such methods on various applications. The present paper proves that global optimality can sometimes be achieved with GPM.

The case $g(x)= x^*Ax$ in~\eqref{eq:maxCg} is of particular interest, because it captures the problem of computing a dominant eigenvector of $A$ under additional constraints. These extra constraints typically make the problem hard. In recent work, Deshpande et al.~\cite{deshpande2014conePCA} study the case where $\calC$ is the intersection of a convex cone with a unit sphere (they call this \emph{cone-constrained PCA}), with Montanari and Richard investigating the important case where the cone is the nonnegative orthant~\cite{montanari2014nonnegativepca}. The algorithms they explore are either of the GPM type, or of the \emph{approximate message passing} (AMP) type. AMP algorithms involve additional memory terms in the iteration (which appears to be different in nature from inertia terms). We would argue GPM is conceptually simpler. The results in~\cite{deshpande2014conePCA}, for example, guarantee convergence to an estimator whose risk (expected error) is within a constant factor from that of the MLE (provided a decent initial guess is available). This is weaker than convergence to a global optimum, but appears to accurately capture the phase transitions in the noise regime. Lemma~\ref{lem:accumptsaregoodfixedpts} is our closest result to such a statement. Note that the simple eigenvector method also achieves the right rates up to constants (Lemma~\ref{lem:eigell2}).

GPM also bears some resemblance with~\cite[Alg.\,5]{hartley2013rotation}, where rotations in $\Rd, d \geq 2$ are synchronized (as opposed to only in $\reals^2$ here). That paper provides no convergence analysis for the algorithm, but observes it works well in practice.

In a different context, GPM is somewhat similar to the algorithm AltMinPhase for phase retrieval~\cite{netrapalli2015altminphase}. There, a signal $x\in\Cn$ is to be estimated based on measurements of the moduli of $A\transpose x$, for some sensing matrix $A$; however, the phases of $A\transpose x$ are unavailable. After initialization based on a dominant singular vector of a data matrix, their algorithm proceeds with an alternating minimization approach where one of the two steps resembles the map $T$ in GPM. They prove convergence to a global optimum under certain conditions (for a resampling variant of the method). Another approach to phase retrieval, PhaseCut~\cite{waldspurger2012phase}, is posed as~\eqref{eq:P}, with a different model for $C$.

If $x$ lives in $\Rn$ rather than $\Cn$, then the constraints $|x_i| = 1$ place $x$ in the discrete space~$\{\pm 1\}^n$, allowing to model such problems as Max-Cut. One possible relaxation of that problem is to allow $x$ to live in $\Cn_1$ anyway, which has the benefit of making the search space connected and smooth (but still nonconvex).
Thus, GPM on~\eqref{eq:P} can be used as a heuristic for Max-Cut, similarly to rank-2 relaxations~\cite{burer2002rank,bandeira2016lowrankmaxcut}.

Our second main result, Theorem~\ref{thm:socpglobalopt}, exhibits a regime in which not only does~\eqref{eq:P} not have spurious local optima, but also
necessary optimality conditions become sufficient. This explains why~\eqref{eq:P} (in that noise regime) belongs to a growing family of nonconvex problems for which nonconvex algorithms are known to converge to global optima. Among others, see recent literature about dictionary learning~\cite{sun2015complete}, phase retrieval~\cite{sun2015nonconvex,sun2016geometric}, tensor decomposition~\cite{ge2015escaping}, certain rank minimization problems~\cite{liu2015snig,zhao2015nonconvex} and a family of smooth semidefinite programs~\citep{boumal2016bmapproach}.

\subsection*{Notation}

$\Cn_1$ is defined by~\eqref{eq:Cn1}. The entry-wise complex conjugate of $u$ is $\bar u$; $u^*$ is the conjugate-transpose of $u$;
$\opnorm{M}$ is the operator norm of matrix $M$ (its largest singular value); $\|u\|_p$ is the $\ell_p$-norm of vector $u$; $\inner{u}{v} = \Re\{u^*v\}$ is a real inner product on $\Cn$.
The principal square root of $-1$ is written $i$, or $j$ if confusion with index $i$ is possible. The $i$th entry of vector $x$ is $x_i$, whereas $x_k$ is the $k$th iterate of Algorithm~\ref{algo:ppm}. Numerical subscripts ($x_0, x_1, \ldots$) denote iterates. 

\section{The eigenvector method} \label{sec:eig}

A particularly simple and efficient approach to~\eqref{eq:P} is to resort to a spectral relaxation~\cite{singer2010angular}. Consider~\eqref{eq:P} where the $n$ constraints $|x_1| = \cdots = |x_n| = 1$ are relaxed to the unique constraint $\|x\|_2^2 = n$. Then, a properly scaled leading eigenvector\footnote{The leading eigenvector is efficiently computed using the standard power method started from a random initial guess
as soon as $\opnorm{\Delta} < n$.} of $C$, call it $v$, is a global optimum. Lemma~\ref{lemma:ell2bound} implies that $v$ is close to the ground truth signal $z$~\eqref{eq:Cn1} (this also follows from standard perturbation analysis in matrix theory). While $v$ need not be in $\Cn_1$, fortunately, its projection to $\Cn_1$ is also close to $z$. Indeed, denote the \emph{eigenvector estimator} $\hat v$ as the component-wise projection of $v$ to $\Cn_1$:
\begin{align}
	\hat v_i & =
	\begin{cases}
		\frac{v_i}{|v_i|} & \textrm{ if } v_i \neq 0, \\
		\frac{e^* v}{|e^* v|} & \textrm{ otherwise,}
	\end{cases}
	\label{eq:eig}
\end{align}
where $e\in\Cn$ is an arbitrary vector such that $e^* v \neq 0$.\footnote{For $i$ such that $v_i=0$, one can set the phase $\hat v_i$ arbitrarily. The choice $\frac{e^* v}{|e^* v|}$ has the aesthetic advantage of preserving invariance under global phase shift: $u = v e^{i\theta} \implies \hat u = \hat v e^{i\theta}$.}
The following lemma exploits a result of Liu et al.~\citep{liu2016statistical} to show that the eigenvector estimator is almost as close to $z$ as global optima of~\eqref{eq:P} (compare with Lemma~\ref{lemma:ell2bound}.)
\begin{lemma}\label{lem:eigell2}
	The eigenvector estimator $\hat v \in \Cn_1$ is close to $z$ in the following sense:
	\begin{align*}
		d(z, \hat v)
		\leq 8\frac{\opnorm{\Delta}}{\sqrt{n}}.
	\end{align*}
\end{lemma}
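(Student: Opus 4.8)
The plan is to reduce everything to Lemma~\ref{lemma:ell2bound}, applied to the unprojected leading eigenvector, and then to pay only a bounded price for the entry-wise projection onto $\Cn_1$. Concretely, write $v$ for the leading eigenvector of $C$ rescaled so that $\|v\|_2^2 = n$, and $\hat v$ for its projection~\eqref{eq:eig}. First I would bound $d(z, v)$; then I would show $d(z, \hat v) \leq 2\,d(z, v)$; combining the two gives $d(z,\hat v) \leq 8\opnorm{\Delta}/\sqrt n$.

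For the first step, I would check that $v$ meets the hypotheses of Lemma~\ref{lemma:ell2bound}. By construction $\|v\|_2^2 = n$, and $v/\sqrt n$ is a unit-norm eigenvector of $C$ associated with its largest eigenvalue, so $v^*Cv = n\,\lambdamax(C)$; since the Rayleigh quotient of $z$ satisfies $z^*Cz/\|z\|_2^2 \leq \lambdamax(C)$ and $\|z\|_2^2 = n$, we get $v^*Cv \geq z^*Cz$. Hence Lemma~\ref{lemma:ell2bound} yields $d(z, v) \leq 4\opnorm{\Delta}/\sqrt n$ (this is also the content of the result of Liu et al.\ invoked in the statement).

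For the second step, I would establish the elementary fact that entry-wise projection onto the torus $\Cn_1$ is, relative to points of $\Cn_1$, at worst a factor-$2$ map: for any $x\in\Cn$ and any $w\in\Cn_1$, the entry-wise projection $\hat x$ (defined as in~\eqref{eq:eig}) satisfies $\|\hat x - w\|_2 \leq 2\|x - w\|_2$. This is checked coordinate by coordinate: where $x_i \neq 0$, $\hat x_i$ is a nearest point of the unit circle to $x_i$, so $|\hat x_i - x_i| \leq |w_i - x_i|$ and the triangle inequality gives $|\hat x_i - w_i| \leq 2|w_i - x_i|$; where $x_i = 0$, one has $|x_i - w_i| = 1$ and $|\hat x_i - w_i| \leq 2 = 2|x_i - w_i|$ regardless of the (unit-modulus) choice made for $\hat x_i$. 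Squaring and summing gives the claim. Since entry-wise projection commutes with a global phase rotation, applying this with $x = v e^{i\theta_0}$ and $w = z$, where $\theta_0$ achieves the minimum in~\eqref{eq:distance} for $v$, gives $d(z, \hat v) \leq \|\widehat{v e^{i\theta_0}} - z\|_2 \leq 2\|v e^{i\theta_0} - z\|_2 = 2\,d(z, v)$.

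I do not expect a real obstacle: both ingredients are short, and the only point needing a line of care is the degenerate case $v_i = 0$ in~\eqref{eq:eig}, which is harmless precisely because $|z_i e^{i\theta}| = 1$ makes the per-coordinate estimate hold trivially there. The mild ``subtlety'', if any, is simply recognizing that one should not try to bound $d(z,\hat v)$ directly by a spectral argument but instead route through the unprojected eigenvector, where Lemma~\ref{lemma:ell2bound} already does the work.
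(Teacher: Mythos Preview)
Your proposal is correct and follows essentially the same two-step structure as the paper: bound $d(z,v)$ via Lemma~\ref{lemma:ell2bound}, then pay a factor~2 for the entry-wise projection (this projection bound is precisely the content of \cite[Lemma~2]{liu2016statistical}, reproved in Appendix~\ref{apdx:eigimprovement}). Your coordinate-wise argument for the factor~2---using that $\hat x_i$ is a nearest point of the unit circle to $x_i$ and the triangle inequality---is in fact a bit slicker than the polar-coordinate computation in the appendix; one small note: the ``result of Liu et al.'' you allude to is the projection bound (your second step), not the eigenvector bound (your first step).
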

\begin{proof}
	Without loss of generality, pick the scale and global phase of a dominant eigenvector $v$ of $C$ such that $\|v\|_2^2 = n$ and $v^*z = |v^*z|$. With $\hat v$ as in~\eqref{eq:eig}, we find that
	\begin{align*}
		d(z, \hat v) = \min_{\theta \in \reals}\|\hat v e^{i \theta}-z\|_2 & \leq \|\hat v - z\|_2 \leq 2 \|v - z\|_2 \leq 8\frac{\opnorm{\Delta}}{\sqrt{n}}.
	\end{align*}
	The first inequality follows from setting $\theta = 0$; the second inequality follows from~\citep[Lemma~2]{liu2016statistical} (for self-containment, see proof in Appendix~\ref{apdx:eigimprovement}); the third inequality follows from Lemma~\ref{lemma:ell2bound}, because $v^*z = |v^*z|$ ensures $\|v-z\|_2 = d(z, v)$.
\end{proof}
Note that this lemma still makes sense for the real case, where one estimates signs $z \in \{\pm 1\}^n$ rather than phases, based on noisy measurements of relative signs $zz\transpose$.

\section{The generalized power method} \label{sec:ppm}

The eigenvector method produces a good estimator, but it does not, in general, produce a global optimum of~\eqref{eq:P}. This is because the constraint $x\in\Cn_1$ is not acknowledged until the final projection step.
Consequently, we consider GPM (Algorithm~\ref{algo:ppm}.)
It essentially mimics the power method,
with two differences: (i) when applying $C$ to the current vector, some inertia is added, and (ii) at each iteration, the \emph{individual} entries of the current vector are normalized, as opposed to normalizing the vector as a whole.

\begin{algorithm}[h]
	\caption{GPM: Generalized power method for phase synchronization}
	\label{algo:ppm}
	\begin{algorithmic}[1]
		\State \textbf{Input:} $C$~\eqref{eq:C}
		\State Initialize with the eigenvector method $x_0 = \hat v$~\eqref{eq:eig} (not important in practice)
		\State Choose $\alpha \geq 0$ such that $C + \alpha I_n \succeq 0$ (e.g., $\alpha = \opnorm{\Delta}$ or $\alpha = \max(0, -\lambdamin(C))$)
		\State Define $\tilde C = C + \alpha I_n$
		\State Define $T \colon \Cn_1 \to \Cn_1,$ with
		\begin{align}
			T(x)_i & = \begin{cases} (\tilde Cx)_i / |(\tilde Cx)_i| & \textrm{if } (\tilde Cx)_i \neq 0, \\ x_i & \textrm{otherwise.} \end{cases}
			\label{eq:T}
		\end{align}
		\For {$k = 0, 1, 2 \ldots$ }
		\State $x_{k+1} = T(x_k)$
		\EndFor
	\end{algorithmic}
\end{algorithm}

Notice that GPM iterates on the quotient space $\Cn_1 /\!\! \sim$, that is: $T$~\eqref{eq:T} iterates from equivalence class to equivalence class~\eqref{eq:equivalence}. Formally, $T(xe^{i\theta}) = T(x)e^{i\theta}$, which implies $x \sim y \implies T(x) \sim T(y)$.

\begin{remark}
	
	GPM is
	a
	projected gradient method. Indeed, letting $P\colon \Cn\to\Cn_1$ denote entry-wise modulus normalization, and disregarding (unlikely) divisions by zero for now, $x_{k+1} = T(x_k) = P(\alpha x_k + \tilde C x_k) = P(x_k + \frac{1}{2\alpha} 2\tilde C x_k)$, where we recognize $2\tilde C x_k$ as the gradient of $x \mapsto x^*\tilde C x$. Unfortunately, $P$ can be expansive. This precludes using standard convergence analyses of projected gradient methods for GPM.
	Furthermore, in general, GPM is \emph{not} a Riemannian gradient descent~\cite{AMS08}. Indeed, for certain valid choices of $\alpha$, there may exist critical points (see~\eqref{eq:criticalcondition}) which are not fixed for $T$: let $\tilde C = zz^* + \alpha I_n$ such that $\tilde C$ is strictly not diagonally dominant; then, no suboptimal critical point is fixed for $T$. This precludes using these standard convergence analyses as well.

\end{remark}

We first characterize the \emph{fixed points} of $T$~\eqref{eq:T}, that is, points $x$ such that $T(x) = x$.
\begin{lemma} \label{lem:fixedpointl1norm}
	The following statements about $x\in\Cn_1$ are equivalent:
	\begin{enumerate}
		\item[(a)] $x$ is a fixed point of $T$ ;
		\item[(b)] $\forall i, (\tilde Cx)_i \bar x_i = |(\tilde Cx)_i|$ ;
		\item[(c)] $x^*\tilde Cx = \|\tilde Cx\|_1$.
	\end{enumerate}
\end{lemma}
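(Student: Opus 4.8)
The plan is to show the implications $(a)\Rightarrow(b)\Rightarrow(c)\Rightarrow(a)$, which suffices since they chain into a cycle. The key observation to keep in mind throughout is that $x\in\Cn_1$ means $|x_i|=1$ for every $i$, so $\bar x_i x_i = |x_i|^2 = 1$, i.e.\ $\bar x_i = 1/x_i$. This lets me freely multiply the $i$th component of any vector by $\bar x_i$ without changing its modulus.

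First, $(a)\Rightarrow(b)$: suppose $T(x)=x$. Look componentwise at the definition~\eqref{eq:T}. If $(\tilde Cx)_i\neq 0$, then $T(x)_i = (\tilde Cx)_i/|(\tilde Cx)_i|$, and $T(x)_i = x_i$ forces $(\tilde Cx)_i = |(\tilde Cx)_i|\, x_i$; multiplying both sides by $\bar x_i$ and using $|x_i|=1$ gives $(\tilde Cx)_i\bar x_i = |(\tilde Cx)_i|$, which is exactly (b) for that $i$. If instead $(\tilde Cx)_i = 0$, then (b) reads $0 = 0$, which holds trivially. So (b) holds for all $i$. Conversely, for $(b)\Rightarrow(a)$ I can run this backwards: if $(\tilde Cx)_i\neq 0$, condition (b) says $(\tilde Cx)_i\bar x_i$ is a nonnegative real number equal to $|(\tilde Cx)_i|>0$, hence $(\tilde Cx)_i = |(\tilde Cx)_i|\,x_i$ (again using $\bar x_i x_i = 1$), so $T(x)_i = (\tilde Cx)_i/|(\tilde Cx)_i| = x_i$; if $(\tilde Cx)_i = 0$, then $T(x)_i = x_i$ by definition of $T$. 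Thus $(a)\Leftrightarrow(b)$.

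Next, $(b)\Leftrightarrow(c)$: summing (b) over $i$ gives $\sum_i (\tilde Cx)_i\bar x_i = \sum_i |(\tilde Cx)_i|$, and the left side is exactly $x^*\tilde Cx$ while the right side is $\|\tilde Cx\|_1$, so $(b)\Rightarrow(c)$. For the reverse, the point is that we always have the pointwise bound $\Re\{(\tilde Cx)_i\bar x_i\}\le |(\tilde Cx)_i\bar x_i| = |(\tilde Cx)_i|$ (since $|\bar x_i|=1$), and hence
\begin{align*}
  x^*\tilde Cx \;=\; \Re\{x^*\tilde Cx\} \;=\; \sum_i \Re\{(\tilde Cx)_i\bar x_i\} \;\le\; \sum_i |(\tilde Cx)_i| \;=\; \|\tilde Cx\|_1,
\end{align*}
where $x^*\tilde Cx$ is real because $\tilde C$ is Hermitian. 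If (c) holds, this chain of inequalities is an equality, so $\Re\{(\tilde Cx)_i\bar x_i\} = |(\tilde Cx)_i\bar x_i|$ for every $i$, which means each $(\tilde Cx)_i\bar x_i$ is a nonnegative real, i.e.\ $(\tilde Cx)_i\bar x_i = |(\tilde Cx)_i\bar x_i| = |(\tilde Cx)_i|$ — that is (b).

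I do not anticipate a genuine obstacle here; the only thing to be careful about is the bookkeeping around the zero components of $\tilde Cx$, where $T$ is defined to fix $x_i$, and making sure that $x^*\tilde Cx$ is real (which is where Hermiticity of $C$, hence of $\tilde C$, is used). The substance of the argument is the elementary fact that for a unit-modulus $x_i$, the equality case of $\Re\{w\bar x_i\}\le |w|$ characterizes $w$ being a nonnegative multiple of $x_i$.
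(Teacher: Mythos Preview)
Your proof is correct and follows essentially the same route as the paper: $(a)\Leftrightarrow(b)$ from the definition of $T$, $(b)\Rightarrow(c)$ by summing, and $(c)\Rightarrow(b)$ by observing that equality in $\sum_i \Re\{(\tilde Cx)_i\bar x_i\} \le \sum_i |(\tilde Cx)_i|$ forces term-by-term equality. You are slightly more explicit than the paper about the zero-component case and about using Hermiticity to ensure $x^*\tilde Cx$ is real, but the argument is the same.
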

\begin{proof}
	Equivalence of $(a)$ and $(b)$ is an easy consequence of the definition of $T$ ; $(b)$ implies $(c)$ by summing over $i = 1, \ldots, n$. We show $(c)$ implies $(b)$. If $\|\tilde Cx\|_1 = x^*\tilde Cx$, then, with the real inner product $\inner{a}{b} = \Re\{\bar a b\}$ over $\mathbb{C}$,
	\begin{align*}
		\sum_{i=1}^{n} |(\tilde Cx)_i| = \sum_{i=1}^{n} \innersmall{x_i}{(\tilde Cx)_i}.
	\end{align*}
	The terms of the sum on the left hand side dominate those of the sum on the right hand side term by term (by Cauchy--Schwarz and $|x_i|=1$). Thus, the equality of the sums requires equality of all terms: $|(\tilde Cx)_i| = \innersmall{x_i}{(\tilde Cx)_i}$ for all $i$. This implies $(b)$.

\end{proof}

We now show GPM enjoys monotonic cost improvement along iterates, owing to the inertia term $\alpha$. 
The proof relies on the fact that GPM is a nonconvex instance of the Frank--Wolfe algorithm, as previously exploited in~\cite{journee2010generalized,luss2013conditional}.
As shown later, another effect of the inertia term is to ensure global optima of~\eqref{eq:P} are fixed points of the algorithm---see Lemma~\ref{lem:linkfixedsocp}.
\begin{lemma} \label{lem:monotone}
	Iterates produced by Algorithm~\ref{algo:ppm} satisfy $f(x_{k+1}) > f(x_{k})$ (with $f$ the cost function of~\eqref{eq:P}), unless $x_k = x_{k+1}$. In particular, $(f(x_k))_{k=0,1,\ldots}$ converges to a finite value and the iterates $x_0, x_1, \ldots$ do not cycle.
\end{lemma}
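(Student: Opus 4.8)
The plan is to make precise the remark that GPM is a nonconvex Frank--Wolfe iteration. Set $g(x) = x^*\tilde Cx$; since $\tilde C = C + \alpha I_n \succeq 0$, $g$ is convex on $\Cn$, with Euclidean gradient $\nabla g(x) = 2\tilde Cx$ relative to the real inner product $\inner{u}{v} = \Re\{u^*v\}$. The first step is to recognize $x_{k+1} = T(x_k)$ as a maximizer of the linearization of $g$ at $x_k$ over $\Cn_1$: for any $w\in\Cn$, the functional $y\mapsto \inner{y}{w}$ is maximized over $y\in\Cn_1$ by aligning phases entrywise, $y_i = w_i/|w_i|$, with optimal value $\|w\|_1$ (Cauchy--Schwarz together with $|y_i|=1$). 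Taking $w = \tilde Cx_k = \tfrac12\nabla g(x_k)$, formula~\eqref{eq:T} shows $x_{k+1}$ attains $\max_{y\in\Cn_1}\inner{y}{\nabla g(x_k)}$ — the entries where $(\tilde Cx_k)_i = 0$ contribute zero to the value regardless of the choice $T(x)_i = x_i$, so the tie-breaking in~\eqref{eq:T} is harmless — and moreover $x_{k+1}^*\tilde Cx_k = \|\tilde Cx_k\|_1$. Since $x_k\in\Cn_1$ is feasible, this gives $\inner{\nabla g(x_k)}{x_{k+1}-x_k}\geq 0$.

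Second, convexity of $g$ yields $g(x_{k+1}) \geq g(x_k) + \inner{\nabla g(x_k)}{x_{k+1}-x_k} \geq g(x_k)$. On $\Cn_1$ one has $\|x\|_2^2 = n$, hence $g(x) = f(x) + \alpha n$ there, and the chain above becomes exactly $f(x_{k+1}) \geq f(x_k)$. Because $\Cn_1$ is compact and $f$ continuous, $f$ is bounded above on $\Cn_1$, so the nondecreasing sequence $(f(x_k))_{k\geq 0}$ converges to a finite value.

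Third, for the strict improvement, suppose $f(x_{k+1}) = f(x_k)$, i.e.\ $g(x_{k+1}) = g(x_k)$. Then both inequalities in the chain $g(x_{k+1}) \geq g(x_k) + \inner{\nabla g(x_k)}{x_{k+1}-x_k} \geq g(x_k)$ are equalities; the second equality forces $\inner{\nabla g(x_k)}{x_{k+1}-x_k} = 0$, which (using $x_{k+1}^*\tilde Cx_k = \|\tilde Cx_k\|_1$ from Step~1) reads $x_k^*\tilde Cx_k = \|\tilde Cx_k\|_1$. By the equivalence $(a)\Leftrightarrow(c)$ of Lemma~\ref{lem:fixedpointl1norm}, $x_k$ is a fixed point of $T$, i.e.\ $x_{k+1} = x_k$. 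Contrapositively, $x_{k+1} \neq x_k$ implies $f(x_{k+1}) > f(x_k)$. For the ``no cycling'' claim: if $x_{k+m} = x_k$ for some $m\geq 1$, then $f(x_k) \leq f(x_{k+1}) \leq \cdots \leq f(x_{k+m}) = f(x_k)$ forces all these values equal, so the strict improvement just proved gives $x_k = x_{k+1} = \cdots = x_{k+m}$; the only cycles are fixed points.

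I do not expect a genuine obstacle here: the heart of the argument is the one-line convexity inequality for Frank--Wolfe. The only mildly delicate points are (i) checking that the division-by-zero convention in~\eqref{eq:T} still produces a bona fide maximizer of the linearized objective, and (ii) extracting the fixed-point conclusion from the equality case — making sure it collapses precisely to condition (c) of Lemma~\ref{lem:fixedpointl1norm} rather than to a weaker statement — which is what links ``no cost improvement'' to ``no movement''.
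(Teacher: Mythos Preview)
Your proof is correct and follows essentially the same route as the paper: define $g(x)=x^*\tilde Cx$, use convexity of $g$ (from $\tilde C\succeq 0$) to obtain the Frank--Wolfe ascent inequality, identify $T(x_k)$ as the linear maximizer over $\Cn_1$ with value $\|\tilde Cx_k\|_1$, and in the equality case reduce to condition~(c) of Lemma~\ref{lem:fixedpointl1norm}. The paper additionally records the sandwich $g(x_{k+1})\geq\|\tilde Cx_k\|_1\geq g(x_k)$ (used later in Lemma~\ref{lem:ppmaccumpts}), and your explicit no-cycling argument is a welcome spelling-out of what the paper leaves implicit.
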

\begin{proof}
	Let $g(x) = x^* \tilde C x$, recalling that $\tilde C = C + \alpha I_n$. Since $f-g$ is constant on $\Cn_1$, showing monotonous improvement of $g(x_k)$ implies the same for $f(x_k)$. Once this is shown, convergence of $(f(x_k))_{k=0,1,\ldots}$ will follow directly from compactness of $\Cn_1$ and continuity of $f$. Thus, we concentrate on showing monotonous improvement of $g(x_k)$.
	
	Owing to $\tilde C \succeq 0$, $g$ is a convex function.

	Hence, the following inequality holds for all $y\in\Cn$:
	\begin{align}
	g(y) - g(x_k) \geq 2\innersmall{y-x_k}{\tilde Cx_k}.
	\label{eq:convexityg}
	\end{align}
	Setting $y = T(x_k)$ maximizes the right hand side for $y \in \Cn_1$ (it is in that sense that Algorithm~\ref{algo:ppm} is nothing but a nonconvex instance of the Frank--Wolfe algorithm):
	\begin{align*}
	x_{k+1} = T(x_k) \in \arg\max_{y\in\Cn_1} \, \innersmall{y}{\tilde Cx_k}.
	\end{align*}
	To see this, use that both the cost and the constraints are separable for each entry of $y$, then use Cauchy--Schwarz; for $i$ such that $(\tilde Cx_k)_i = 0$, any unit-modulus complex number will do for $y_i$. Since setting $y = x_k$ would yield a zero improvement, the right hand side of~\eqref{eq:convexityg} with $y = x_{k+1}$ is nonnegative: $g(x_{k+1}) \geq g(x_{k})$.
	
	We now show that the improvement in $g$ is strict if $x_{k+1} \neq x_k$. To this end, notice that $\innersmall{x_{k+1}}{\tilde Cx_k} = \|\tilde Cx_k\|_1$ since $x_{k+1}$ contains the phases of the entries of $\tilde C x_k$. Thus, using nonnegativity again,
	\begin{align*}
		0 \leq \innersmall{x_{k+1} - x_k}{\tilde C x_k} = \innersmall{x_{k+1}}{\tilde C x_k} - \innersmall{x_k}{\tilde C x_k} = \|\tilde C x_k\|_1 - g(x_k),
	\end{align*}

	so it holds that $g(x_k) \leq \|\tilde Cx_k\|_1$. Using this and~\eqref{eq:convexityg}, it also follows that
	\begin{align*}
		g(x_{k+1}) & \geq g(x_k) + 2\left( \|\tilde C x_k\|_1 - g(x_k)\right)
		\geq \|\tilde C x_k\|_1.
	\end{align*}
	(Simply replace 2 by 1.) Hence, $g(x_{k+1}) \geq \|\tilde Cx_k\|_1 \geq g(x_k)$ so that the sequence $(\|\tilde Cx_k\|_1)_{k=0,1,\ldots}$ also grows monotonically. If $g(x_{k+1}) = g(x_{k})$, then $g(x_{k}) = \|\tilde Cx_k\|_1$.
	By Lemma~\ref{lem:fixedpointl1norm}, this implies
	$x_k = x_{k+1}$.

\end{proof}
As a consequence of monotonous improvement of $f$ along iterates, if $x$ is a fixed point of $T$ but it is not a local optimum of~\eqref{eq:P}, then it is not asymptotically stable. That is, there exist points $y\in\Cn_1$ arbitrarily close to $x$ such that $d(x, T^k(y))$ does not converge to 0 with $k\to\infty$. This makes convergence to such fixed points unlikely in practice.

The two previous lemmas lead to a weak convergence result for the iterates, akin to the classical convergence results for gradient descent methods in nonlinear optimization.
\begin{lemma}\label{lem:ppmaccumpts}
	All accumulation points of $(x_k)_{k=0,1,\ldots}$ generated by Algorithm~\ref{algo:ppm} are fixed points of $T$. If $x$ is an accumulation point (at least one exists), then $f(x) \geq f(x_0)$.
\end{lemma}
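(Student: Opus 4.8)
The plan is to combine the monotonicity established in Lemma~\ref{lem:monotone} with compactness of $\Cn_1$ and the continuity of the auxiliary functions appearing in that proof, while carefully avoiding any appeal to continuity of $T$ itself (which may fail precisely at points where some coordinate of $\tilde C x$ vanishes).

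First, existence: $\Cn_1$ is compact, so by Bolzano--Weierstrass the sequence $(x_k)$ has at least one accumulation point. Next, the lower bound on $f$. By Lemma~\ref{lem:monotone}, $(f(x_k))_{k=0,1,\ldots}$ is non-decreasing, hence $f(x_k) \geq f(x_0)$ for all $k$; being bounded above ($f$ is continuous on the compact set $\Cn_1$), it converges to some value $f^\star \geq f(x_0)$. If $x = \lim_{j} x_{k_j}$ is an accumulation point, continuity of $f$ gives $f(x) = \lim_j f(x_{k_j}) = f^\star \geq f(x_0)$.

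For the fixed-point claim, set $g(x) = x^*\tilde Cx$, which differs from $f$ by the additive constant $\alpha n$ on $\Cn_1$, so $(g(x_k))_k$ also converges, say to $g^\star = f^\star + \alpha n$. From the proof of Lemma~\ref{lem:monotone} we have the sandwich $g(x_k) \leq \|\tilde Cx_k\|_1 \leq g(x_{k+1})$ for every $k$. Evaluating along the subsequence $k = k_j$: the outer terms $g(x_{k_j})$ and $g(x_{k_j+1})$ both converge to $g^\star$ (as subsequences of a convergent sequence), so the squeezed middle term $\|\tilde Cx_{k_j}\|_1$ also converges to $g^\star$. On the other hand $g$ and $x \mapsto \|\tilde Cx\|_1$ are both continuous, so $g(x_{k_j}) \to g(x)$ and $\|\tilde Cx_{k_j}\|_1 \to \|\tilde Cx\|_1$. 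Therefore $g(x) = g^\star = \|\tilde Cx\|_1$, i.e.\ $x^*\tilde Cx = \|\tilde Cx\|_1$, and by the equivalence of $(c)$ and $(a)$ in Lemma~\ref{lem:fixedpointl1norm}, $x$ is a fixed point of $T$.

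The main obstacle worth flagging is exactly the potential discontinuity of $T$: one cannot simply pass to the limit in $x_{k_j+1} = T(x_{k_j})$. Routing the argument through the continuous quantities $g$ and $\|\tilde C\,\cdot\,\|_1$, together with the sandwich inequality already extracted inside Lemma~\ref{lem:monotone}, is what makes the limiting argument legitimate; everything else is routine.
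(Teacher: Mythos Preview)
Your proof is correct and follows essentially the same approach as the paper: both arguments avoid continuity of $T$ by working with the continuous quantities $g$ and $\|\tilde C\,\cdot\,\|_1$, invoking the inequalities $g(x_k) \leq \|\tilde Cx_k\|_1 \leq g(x_{k+1})$ established inside Lemma~\ref{lem:monotone}, and concluding via Lemma~\ref{lem:fixedpointl1norm}. The only cosmetic difference is that the paper telescopes $\sum_k\big(\|\tilde Cx_k\|_1 - g(x_k)\big)$ to show the gap vanishes along the whole sequence, whereas you squeeze directly along the convergent subsequence; both yield the same conclusion.
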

\begin{proof}
	Continuing the proof of Lemma~\ref{lem:monotone} (specifically, eq.~\eqref{eq:convexityg}), we have (with $g(x) = x^* \tilde C x$)
	\begin{align*}
		\frac{1}{2}\left[g(x_{k+1}) - g(x_k)\right] \geq \innersmall{x_{k+1} - x_k}{\tilde Cx_k} = \|\tilde Cx_k\|_1 - g(x_k) \geq 0.
	\end{align*}
	Summing for $k = 0, \ldots, K-1$, we get, for all $K \geq 1$,
	\begin{align*}
		\frac{1}{2}\left[g(x_{K}) - g(x_0)\right] \geq \sum_{k = 0}^{K-1} \|\tilde Cx_k\|_1 - g(x_k).
	\end{align*}
	Since $(g(x_k))_{k=0,1,\ldots}$ converges (say, to $g_\infty < \infty$), the (nonnegative) terms of the sum must converge to zero:
	\begin{align*}
		\lim_{k\to\infty} \|\tilde Cx_k\|_1 - g(x_k) = 0.
	\end{align*}
	Since $\Cn_1$ is compact, there is a convergent subsequence of iterates whose limit point $x$ lies in $\Cn_1$ (Bolzano--Weierstrass). By continuity, this limit point verifies $\|\tilde Cx\|_1 - g(x) = 0$, hence it is a fixed point of $T$ (Lemma~\ref{lem:fixedpointl1norm}). Furthermore, since the cost function $f$ is continuous and $f(x_{k+1}) \geq f(x_k)$ (Lemma~\ref{lem:monotone}), $$f(x) = \sup\{ f(x_k) : k \textrm{ indexes the subsequence} \} \geq f(x_0).$$ This is true of all accumulation points of $(x_k)_{k=0,1,\ldots}$.

\end{proof}
(Slightly increasing $\alpha$ in Algorithm~\ref{algo:ppm} further ensures that the set of accumulation points is connected, see Lemma~\ref{lem:connectedness} in Appendix~\ref{apdx:connectedness}.)

We now show that, provided the noise $\Delta$ is not too large, fixed points of $T$ are either far from the signal $z$, or close to it.
\begin{lemma} \label{lem:fixedpointsgoodbad}
	If $x$ is a fixed point for $T$~\eqref{eq:T}, (at least) one of the following holds:
	\begin{align*}
	|z^*x|  \geq n - 4(\opnorm{\Delta} + \alpha), & & \textrm{ or } & & |z^*x| \leq 4(\opnorm{\Delta} + \alpha).
	\end{align*}
	Exactly one statement holds if $\opnorm{\Delta} + \alpha < n/8$.
\end{lemma}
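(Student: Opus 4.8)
The plan is to turn the fixed-point identity from Lemma~\ref{lem:fixedpointl1norm}(c), $x^*\tilde C x = \|\tilde C x\|_1$, into a quadratic inequality for $t := |z^*x|$ by pinching the common value $\|\tilde C x\|_1$ between an upper bound of order $t^2$ and a lower bound of order $nt$. Write $E = \Delta + \alpha I_n$, so that $\tilde C = zz^* + E$ and $\opnorm{E} \le \opnorm{\Delta} + \alpha =: \epsilon$ (using $\alpha\ge0$), and set $\beta = z^*x$, so that $\tilde C x = \beta z + Ex$ and $|\beta| = t$. The upper bound is immediate: $\|\tilde C x\|_1 = x^*\tilde C x = |\beta|^2 + x^*Ex \le t^2 + \epsilon n$, using $x^*z = \bar\beta$, $\|x\|_2^2 = n$, and $x^*Ex \le \opnorm{E}\,n$.

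The key step will be the lower bound, where I would avoid the crude estimate $\|\tilde C x\|_1 \ge \|\tilde C x\|_2$ (which only operates at scale $\sqrt n$ and cannot produce an $n$-scale dichotomy) and instead test $\tilde C x$ against the ground-truth direction $z$: by the triangle inequality and $|z_i| = 1$, $|z^*\tilde C x| = \big|\sum_i \bar z_i(\tilde C x)_i\big| \le \sum_i |(\tilde C x)_i| = \|\tilde C x\|_1$. Since also $z^*\tilde C x = \beta n + z^*Ex$ with $|z^*Ex| \le \|z\|_2\|Ex\|_2 \le \epsilon n$, we get $\|\tilde C x\|_1 \ge |z^*\tilde C x| \ge tn - \epsilon n$. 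Combining with the upper bound yields $(t-\epsilon)n \le t^2 + \epsilon n$, i.e.\ $t^2 - nt + 2\epsilon n \ge 0$.

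To finish, I would just solve this quadratic inequality in $t \in [0,n]$. If $\epsilon \ge n/8$ the intervals $[0,4\epsilon]$ and $[n-4\epsilon,n]$ already cover $[0,n]$, so the claim is vacuous; if $\epsilon < n/8$, the inequality forces $t \le \frac{1}{2}(n-\sqrt{n^2-8\epsilon n})$ or $t \ge \frac{1}{2}(n+\sqrt{n^2-8\epsilon n})$, and the elementary bound $\sqrt{n^2-8\epsilon n} = n\sqrt{1-8\epsilon/n} \ge n - 8\epsilon$ (from $\sqrt{1-u}\ge1-u$ on $[0,1]$) gives $\frac{1}{2}(n-\sqrt{n^2-8\epsilon n}) \le 4\epsilon$ and $\frac{1}{2}(n+\sqrt{n^2-8\epsilon n}) \ge n-4\epsilon$, hence $|z^*x| \le 4\epsilon$ or $|z^*x| \ge n-4\epsilon$; when $\epsilon < n/8$ these intervals are disjoint ($4\epsilon < n/2 < n-4\epsilon$), so exactly one holds. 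The one genuine decision in the argument is the choice, in the lower bound, to contract the fixed-point identity against $z$ rather than against $x$; after that it is Cauchy--Schwarz, operator-norm estimates, and root-finding.
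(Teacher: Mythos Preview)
Your proof is correct and follows essentially the same route as the paper: use the fixed-point identity $x^*\tilde C x=\|\tilde C x\|_1$ together with an upper bound of order $t^2$ and a lower bound of order $nt$ to obtain the quadratic inequality $t(n-t)\le 2n(\opnorm{\Delta}+\alpha)$, then solve it using $\sqrt{1-u}\ge 1-u$. The only cosmetic difference is in the lower bound: the paper applies the reverse triangle inequality term by term to $\|\tilde C x\|_1=\sum_i|(z^*x)z_i+(\Delta x)_i+\alpha x_i|$, whereas you obtain the same estimate via the H\"older-type bound $\|\tilde C x\|_1\ge|z^*\tilde C x|$ and then expand $z^*\tilde C x$; both yield exactly the same quadratic.
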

\begin{proof}
	The proof compares upper and lower bounds on $x^*\tilde Cx$. On one hand, using $\tilde C = zz^* + \Delta + \alpha I_n$, we have
	$$
	x^*\tilde Cx = |z^*x|^2 + x^*\Delta x + \alpha n \leq |z^*x|^2 + n\opnorm{\Delta} + \alpha n.
	$$
	On the other hand, since $x$ is a fixed point, Lemma~\ref{lem:fixedpointl1norm} implies
	$$
	x^*\tilde Cx = \| \tilde Cx \|_1 = \sum_{i=1}^n \Big| (z^*x)z_i + (\Delta x)_i + \alpha x_i \Big| \geq n|z^*x| - \|\Delta x\|_1 - \alpha n.
	$$
	Combining both inequalities and $\|\Delta x\|_1 \leq \sqrt{n}\|\Delta x\|_2 \leq n \opnorm{\Delta}$, we find that
	\begin{align}
	|z^*x|(n - |z^*x|) \leq 2n(\opnorm{\Delta} + \alpha).
	\label{eq:quadraticinequality}
	\end{align}
	Since $0 \leq |z^*x| \leq n$, the left hand side is at most $n^2/4$. Thus, the inequality is informative only if $\opnorm{\Delta} + \alpha < n/8$, which we assume to hold (otherwise, the lemma's statement is trivial).
	
	Consider the roots $r_\pm$ of the concave quadratic $q(t) = t(n - t) - 2n(\opnorm{\Delta} + \alpha)$:
	\begin{align*}
		r_\pm = n \frac{1 \pm \sqrt{1 - \frac{8(\opnorm{\Delta} + \alpha)}{n}}}{2}.
	\end{align*}
	By~\eqref{eq:quadraticinequality}, we know that $q(|z^*x|) \leq 0$, that is, $|z^*x|$ is in $[0, r_-]$ or $[r_+, n]$. Then, using twice that $\sqrt{1 - s} \geq 1-s$ for all $s \in [0, 1]$, it follows that either
	\begin{align*}
	|z^*x| & \geq r_+ =  n \frac{1 + \sqrt{1 - \frac{8(\opnorm{\Delta} + \alpha)}{n}}}{2} \geq n\left(1 - \frac{4(\opnorm{\Delta} + \alpha)}{n}\right), \textrm{ or } \\
	|z^*x| & \leq r_- = n \frac{1 - \sqrt{1 - \frac{8(\opnorm{\Delta} + \alpha)}{n}}}{2} \leq 4(\opnorm{\Delta} + \alpha).
	\end{align*}
	This concludes the proof.
\end{proof}
Since $\alpha = \opnorm{\Delta}$ is always an acceptable choice, this last result states that, as soon as $\opnorm{\Delta} < n/16$, the fixed points of $T$ are separated in two sets: one concentrated around $z$ (the ``good'' fixed points), and one bounded away from $z$ (the ``bad'' fixed points). The latter must have poor performance in terms of the cost function $f$ as well. Then, owing to monotonicity, a good initialization is sufficient to rule out convergence to the bad fixed points. This is formalized in a lemma.
\begin{lemma} \label{lem:accumptsaregoodfixedpts}
	If $\opnorm{\Delta} \leq n/13$ and if $\alpha \leq \opnorm{\Delta}$, all accumulation points $x$ of Algorithm~\ref{algo:ppm} are good fixed points, that is, they satisfy $|z^*x| \geq n - 8\opnorm{\Delta}$.
\end{lemma}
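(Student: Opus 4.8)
The plan is to use that the eigenvector initialization $x_0 = \hat v$ is already so close to $z$ (Lemma~\ref{lem:eigell2}) that $f(x_0)$ is within a lower-order term of the maximal value ($\approx n^2$), and that monotonicity (Lemma~\ref{lem:ppmaccumpts}) propagates this to every accumulation point. Concretely, I would combine three facts already available: (i) every accumulation point $x$ of Algorithm~\ref{algo:ppm} is a fixed point of $T$ with $f(x) \geq f(x_0)$ (Lemma~\ref{lem:ppmaccumpts}); (ii) any fixed point satisfies the dichotomy $|z^*x| \geq n - 4(\opnorm{\Delta}+\alpha)$ or $|z^*x| \leq 4(\opnorm{\Delta}+\alpha)$ (Lemma~\ref{lem:fixedpointsgoodbad}); and (iii) a quantitative lower bound on $f(x_0)$. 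The whole argument then reduces to showing that $f(x) \geq f(x_0)$ is incompatible with the ``bad'' alternative of the dichotomy; the ``good'' alternative, together with $\alpha \leq \opnorm{\Delta}$, immediately yields $|z^*x| \geq n - 8\opnorm{\Delta}$.

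For (iii), I would first turn the $\ell_2$ bound of Lemma~\ref{lem:eigell2} into a bound on $|z^*x_0|$: using $d(z,x)^2 = 2(n - |z^*x|)$ from~\eqref{eq:distance}, Lemma~\ref{lem:eigell2} gives $|z^*x_0| \geq n - 32\opnorm{\Delta}^2/n$. Since $f(y) = y^*Cy = |z^*y|^2 + y^*\Delta y$ for any $y$, and $|y^*\Delta y| \leq \opnorm{\Delta}\|y\|_2^2 = n\opnorm{\Delta}$ on $\Cn_1$, expanding the square and discarding the nonnegative $O(\opnorm{\Delta}^4/n^2)$ term gives the clean estimate $f(x_0) \geq n^2 - 64\opnorm{\Delta}^2 - n\opnorm{\Delta}$.

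Now let $x$ be an accumulation point (one exists by Lemma~\ref{lem:ppmaccumpts}) and set $t = |z^*x| \in [0,n]$. From (i), $f(x) \geq f(x_0)$; from the same identity, $f(x) \leq t^2 + n\opnorm{\Delta}$. Combining with (iii) yields $t^2 \geq n^2 - 64\opnorm{\Delta}^2 - 2n\opnorm{\Delta}$. Plugging in $\opnorm{\Delta} \leq n/13$ gives $t^2 \geq (1 - 64/169 - 26/169)n^2 = \frac{79}{169}n^2$, hence $t \geq \frac{\sqrt{79}}{13}n > \frac{8}{13}n \geq 8\opnorm{\Delta} \geq 4(\opnorm{\Delta}+\alpha)$, where I used $\sqrt{79} > 8$ and $\alpha \leq \opnorm{\Delta}$. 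This contradicts the inequality $t \leq 4(\opnorm{\Delta}+\alpha)$, so the other branch of Lemma~\ref{lem:fixedpointsgoodbad} must hold: $t \geq n - 4(\opnorm{\Delta}+\alpha) \geq n - 8\opnorm{\Delta}$, which is the claim.

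The only delicate part is bookkeeping of constants. Two things make it go through: the eigenvector estimator is close to $z$ at the fine scale $\opnorm{\Delta}^2/n$ (Lemma~\ref{lem:eigell2}), not merely $O(\sqrt n)$, so that $f(x_0)$ genuinely exceeds what a bad fixed point can attain; and the elementary inequality $\frac{79}{169} > \big(\frac{8}{13}\big)^2 = \frac{64}{169}$, which is exactly what pins the admissible noise level at $\opnorm{\Delta} \leq n/13$. I do not anticipate a genuine conceptual obstacle. If one prefers not to invoke Lemma~\ref{lem:fixedpointsgoodbad} as a black box, the bound $t^2 \geq \frac{79}{169}n^2$ already gives $t > n/2$, which can be fed into the quadratic inequality $t(n-t) \leq 2n(\opnorm{\Delta}+\alpha)$ from its proof to conclude $n - t \leq 2n(\opnorm{\Delta}+\alpha)/t \leq 4(\opnorm{\Delta}+\alpha) \leq 8\opnorm{\Delta}$ directly.
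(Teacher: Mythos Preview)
Your proposal is correct and takes essentially the same approach as the paper: both use Lemma~\ref{lem:eigell2} to get $|z^*x_0| \geq n - 32\opnorm{\Delta}^2/n$ and hence $f(x_0) \geq n^2 - 64\opnorm{\Delta}^2 - n\opnorm{\Delta}$, then combine monotonicity with the dichotomy of Lemma~\ref{lem:fixedpointsgoodbad} to rule out the bad branch. The only cosmetic difference is that the paper phrases the comparison as ``$f(x_0)$ exceeds $f(y)$ for every bad fixed point $y$'' (leading to the condition $1 - 128c^2 - 2c > 0$ at $c = 1/13$), whereas you phrase it as ``$f(x) \geq f(x_0)$ forces $|z^*x|^2 \geq \tfrac{79}{169}n^2 > (8\opnorm{\Delta})^2$''; these are the same numerical inequality.
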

\begin{proof}
	It is sufficient to show that $x_0 = \hat v$ (the eigenvector estimator from Section~\ref{sec:eig}) outperforms all bad fixed points in terms of the cost $x^*Cx$. From Lemma~\ref{lem:eigell2}, we have (fixing the phase $z^*x_0 = |z^*x_0|$ without loss of generality)
	\begin{align*}
		2(n - |z^*x_0|) = \|x_0 - z\|_2^2 \leq 64\frac{\opnorm{\Delta}^2}{n}.
	\end{align*}
	This allows to bound the performance of $x_0$ in terms of the cost $f$:
	\begin{align*}
		x_0^*Cx_0^{} & = |z^*x_0|^2 + x_0^* \Delta x_0^{} \\
			& \geq \left( n - 32\frac{\opnorm{\Delta}^2}{n} \right)^2 - n\opnorm{\Delta} \\
			& \geq n^2 - 64\opnorm{\Delta}^2 - n\opnorm{\Delta}.
	\end{align*}
	Likewise, we know from Lemma~\ref{lem:fixedpointsgoodbad} that bad fixed points $y$ are far from $z$: $|z^*y| \leq 4(\opnorm{\Delta} + \alpha) \leq 8\opnorm{\Delta}$. Hence,
	\begin{align*}
		y^*Cy & = |z^*y|^2 + y^*\Delta y 
			\leq 64\opnorm{\Delta}^2 + n\opnorm{\Delta}.
	\end{align*}
	Hence, if
	\begin{align}
		n^2 - 64\opnorm{\Delta}^2 - n\opnorm{\Delta} > 64\opnorm{\Delta}^2 + n\opnorm{\Delta},
		\label{eq:eigperffoo}
	\end{align}
	the initial point $x_0$ outperforms all bad fixed points. Then, by Lemma~\ref{lem:ppmaccumpts}, all accumulation points of Algorithm~\ref{algo:ppm} are good fixed points. We show a sufficient condition for~\eqref{eq:eigperffoo}~to hold. Assuming $\opnorm{\Delta} \leq cn$ for some constant $c$ to be determined, condition~\eqref{eq:eigperffoo} is satisfied in particular if (divide through by $n^2$)
	\begin{align*}
		1 - 128c^2 - 2c > 0. 
	\end{align*}
	This is satisfied for $c = 1/13$.
\end{proof}
We are almost ready to prove Theorem~\ref{thm:ppmglobalopt}. The next technical lemma is the last piece we need. It is presented independently because it is suboptimal and constitutes the bottleneck in our analysis.
\begin{lemma} \label{lem:Deltaxinfty}
	If $\opnorm{\Delta} \leq cn$ and $\|\Delta z\|_\infty \leq cn\sqrt{\log n}$ for some $c$, then $\|\Delta x\|_\infty \leq (\sqrt{\log n} + \|x-z\|_2 ) cn$.
\end{lemma}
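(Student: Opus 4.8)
The plan is to split $x$ around the ground truth $z$ and combine the triangle inequality with the two crude norm comparisons $\|\cdot\|_\infty \le \|\cdot\|_2$ and $\|Av\|_2 \le \opnorm{A}\,\|v\|_2$. First I would write $\Delta x = \Delta z + \Delta(x-z)$, so that by subadditivity of $\|\cdot\|_\infty$,
\[
	\|\Delta x\|_\infty \;\le\; \|\Delta z\|_\infty + \|\Delta(x-z)\|_\infty.
\]
The first term is handled immediately by hypothesis: $\|\Delta z\|_\infty \le cn\sqrt{\log n}$.

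For the second term, I would bound the $\ell_\infty$-norm by the $\ell_2$-norm and then invoke the operator-norm hypothesis,
\[
	\|\Delta(x-z)\|_\infty \;\le\; \|\Delta(x-z)\|_2 \;\le\; \opnorm{\Delta}\,\|x-z\|_2 \;\le\; cn\,\|x-z\|_2 .
\]
Adding the two estimates yields $\|\Delta x\|_\infty \le cn\sqrt{\log n} + cn\,\|x-z\|_2 = \big(\sqrt{\log n} + \|x-z\|_2\big)cn$, which is exactly the claimed bound. Note that no structure on $x$ (e.g.\ $x\in\Cn_1$) is needed for the argument; the decomposition around $z$ works for any vector.

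There is no genuine obstacle inside this derivation — it is three lines of elementary inequalities. The reason the lemma is singled out as the bottleneck is that the term $cn\,\|x-z\|_2$ is informative only when $\|x-z\|_2$ is already known to be small, of order $\sqrt{\log n}$ or less; for a generic point of $\Cn_1$ one only has $\|x-z\|_2 \le \sqrt{2n}$, and then the bound is vacuous. Consequently the real work is deferred to the earlier ``good/bad fixed point'' dichotomy (Lemmas~\ref{lem:fixedpointsgoodbad} and~\ref{lem:accumptsaregoodfixedpts}) and the $\ell_2$-proximity estimates (Lemmas~\ref{lemma:ell2bound} and~\ref{lem:eigell2}), which must first guarantee that the relevant iterates or accumulation points lie within $O(\opnorm{\Delta}/\sqrt n)$ of $z$, so that plugging such an $x$ into this lemma produces a useful (rather than trivial) entry-wise control on $\Delta x$.
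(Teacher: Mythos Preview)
Your proof is correct and follows exactly the same route as the paper: split $\Delta x = \Delta z + \Delta(x-z)$, apply the triangle inequality, use the hypothesis on $\|\Delta z\|_\infty$, and bound $\|\Delta(x-z)\|_\infty \le \|\Delta(x-z)\|_2 \le \opnorm{\Delta}\|x-z\|_2$. The paper likewise flags the $\ell_\infty \to \ell_2$ step as the suboptimal one and the bottleneck of the analysis, matching your commentary.
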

\begin{proof}
	The proof starts with a triangular inequality:
	\begin{align*}
	\|\Delta x\|_\infty & \leq \|\Delta z\|_\infty + \|\Delta (x-z)\|_\infty \\
	& \leq cn\sqrt{\log n} + \|\Delta(x-z)\|_2 \\
	& \leq (\sqrt{\log n} + \|x-z\|_2 ) cn.
	\end{align*}
	The suboptimal step occurs when bounding an $\ell_\infty$-norm with an $\ell_2$-norm.
\end{proof}
We now prove the main result about GPM. This part of the proof draws heavily on~\cite[\S4.4]{bandeira2014tightness}.
\begin{proof}[Proof of Theorem~\ref{thm:ppmglobalopt}]
	By Lemma~\ref{lem:accumptsaregoodfixedpts}, all accumulation points $x$ are fixed points satisfying $|z^*x| \geq n - 8\opnorm{\Delta}$.
	We are about to show that, for such $x$, $S = S(x)$~\eqref{eq:S} is positive semidefinite and has rank $n-1$. By Lemma~\ref{lemma:sufficientS}, this proves all accumulation points are globally optimal, and that the global optimum is unique up to phase. For convenience, we work with the equivalent definition of $S$ at fixed points:
	\begin{align*}
		S = \ddiag(\tilde Cxx^*) - \tilde C,
	\end{align*}
	where $\tilde C$ appears instead of $C$ because $S$ is invariant under diagonal shifts, and where there is no need to extract the real part of $\diag(\tilde Cxx^*)$ since, for fixed points, this vector is real, nonnegative (Lemma~\ref{lem:fixedpointl1norm}).
	
	Observe that $Sx = 0$. Hence, it suffices to show that $u^*Su > 0$ for all nonzero $u\in\Cn$ such that $u^*x = 0$. Without loss of generality, assume $z^*x = |z^*x|$. Using Lemma~\ref{lem:fixedpointl1norm} which says $(\tilde C x)_i \bar x_i = |(\tilde C x)_i|$ and $\tilde C = zz^* + \Delta + \alpha I_n$, we have

	\begin{align*}
		u^*Su & = \sum_{i=1}^{n} |u_i|^2 |(\tilde Cx)_i| - u^*\tilde Cu \\
		& = \sum_{i=1}^{n} |u_i|^2 \big| |z^*x| z_i + (\Delta x)_i + \alpha x_i\big| - |u^*z|^2 - u^*\Delta u - \alpha \|u\|_2^2 \\
		& \geq \sum_{i=1}^{n} |u_i|^2 \left( |z^*x| - |(\Delta x)_i| - \alpha \right) - |u^*(z-x)|^2 - u^*\Delta u  - \alpha \|u\|_2^2\\
		& \geq \|u\|_2^2 \left( |z^* x| - \|\Delta x\|_\infty - \|z-x\|_2^2 - \opnorm{\Delta} -2\alpha\right) \\
		& \geq \|u\|_2^2 \left( n - 27\opnorm{\Delta} - \|\Delta x\|_\infty \right),
	\end{align*}
	where we use $|z^*x| \geq n - 8\opnorm{\Delta}$ twice in the last inequality. Assuming $\opnorm{\Delta} \leq cn$ and $\|\Delta z\|_\infty \leq cn\sqrt{\log n}$ for some $c$ to be determined, and using Lemma~\ref{lem:Deltaxinfty}, it comes that a sufficient condition for the latter to be positive is
	\begin{align*}
		n - (27 + \sqrt{\log n} + 4\sqrt{cn})cn > 0.
	\end{align*}
	Further assuming $c = c' n^{-1/3}$, and observing that $n^{-1/3}(27 + \sqrt{\log n}) \leq 27$ for all $n \geq 1$, a sufficient condition is
	\begin{align*}
		1 - (27 + 4 \sqrt{c'} )c' > 0.
	\end{align*}
	This is satisfied for $c' = 1/28$.
	
	So far, we showed all accumulation points of $(x_k)_{k=0,1,\ldots}$ (there exists one by compactness) are global optima of~\eqref{eq:P}, and that this global optimum is unique up to phase. Recall that $[x]$ denotes the equivalence class of $x\in\Cn_1$ for the equivalence relation~$\sim$~\eqref{eq:equivalence} and that $\Cn_1/\!\!\sim$ is the corresponding quotient space (the set of equivalence classes). The cost function $h([x]) = f(x) = x^*Cx$ is well defined and continuous on $\Cn_1/\!\!\sim$, which is a nonempty compact metric space. Let $x_\textrm{opt}$ be a global optimum of~\eqref{eq:P}. By continuity of $h$, it holds that $\lim_{k\to\infty} h([x_k]) = h([x_\textrm{opt}])$ (simply extract a convergent subsequence of $[x_0], [x_1], \ldots$, then switch $h$ and the limit).
	It then follows from technical Lemma~\ref{lem:cvgcelemma} (see appendix) that $([x_k])_{k=0,1,\ldots}$ converges to $[x_\textrm{opt}]$.

\end{proof}

In the regime of Theorem~\ref{thm:ppmglobalopt}, the bad fixed points cannot be local optima (see the proof of Lemma~\ref{lem:SOCPclosetoz} below). As a result, bad fixed points are not asymptotically stable (see the comment after Lemma~\ref{lem:monotone}).
This further explains why, typically, GPM converges to global optima even when $x_0$ is chosen at random (in this regime). Lemmas~\ref{lem:fixedpointl1norm}, \ref{lem:monotone}, \ref{lem:ppmaccumpts}, \ref{lem:fixedpointsgoodbad} and \ref{lem:connectedness} apply regardless of $x_0$.

We note in passing that GPM can be implemented in a decentralized fashion, akin to work by Howard et al.~\cite{howard2010estimation}.

\section{About sufficiency of necessary optimality conditions} \label{sec:landscape}

If $x$ is a global optimum of~\eqref{eq:P}, it satisfies first- and second-order necessary optimality conditions. All points who do are called \emph{second-order critical points}. In this section, we prove they are close to the signal $z$. Furthermore, we establish Theorem~\ref{thm:socpglobalopt} which shows the necessary conditions turn out to be sufficient for optimality. Both statements require assumptions on the perturbation $\Delta$ similar to those previously studied.

Sufficiency of the necessary conditions is an unusual and highly desirable property for nonconvex optimization. In particular, this shows that~\eqref{eq:P} can be solved by any algorithm which converges to second-order critical points. This is notably the case for certain versions of the Riemannian trust-region method~\cite{genrtr,boumal2016globalrates}---see Section~\ref{sec:XP}.

\begin{lemma} \label{lem:necessaryconditions}
	If  $x\in\Cn_1$ is a global optimum for~\eqref{eq:P}, then it satisfies first- and second-order necessary optimality conditions. First-order conditions require:
	\begin{align}
		S(x)x = 0,
		\label{eq:criticalcondition}
	\end{align}
	where $S(x)$ is defined by~\eqref{eq:S}. Second-order conditions require:
	\begin{align}
		\forall \dot x \in \T_x\Cn_1, \quad \inner{\dot x}{S(x)\dot x} \geq 0,
		\label{eq:socpcondition}
	\end{align}
	where
	\begin{align}
		\T_x\Cn_1 = \big\{ \dot x \in \Cn : \forall i, \inner{\dot x_i}{x_i} = \Re\{\dot x_i \bar x_i\} = 0 \big\}
		\label{eq:tangentspace}
	\end{align}
	is the tangent space at $x$ to the manifold $\Cn_1$.
	These conditions correspond, respectively, to having the Riemannian gradient of $f$ on $\Cn_1$ equal to zero \emph{(critical point)}, and the Riemannian Hessian of $f$ on $\Cn_1$ be negative semidefinite.
\end{lemma}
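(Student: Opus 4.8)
The plan is to treat $\Cn_1$ (a product of $n$ unit circles) as a Riemannian submanifold of $\Cn \cong \reals^{2n}$ with the real inner product $\inner{u}{v} = \Re\{u^*v\}$, and to read both necessary conditions off the standard formulas for the Riemannian gradient and Hessian of the ambient extension $\barf(x) = x^*Cx$ \cite{AMS08}. Since a global optimum of~\eqref{eq:P} is in particular a local maximum, its Riemannian gradient must vanish and its Riemannian Hessian must be negative semidefinite on $\T_x\Cn_1$; the substance of the lemma is that, for this cost and this manifold, those two statements are exactly $S(x)x = 0$ and~\eqref{eq:socpcondition}.

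First I would record the ambient derivatives. From $\D\barf(x)[\dot x] = 2\Re\{(Cx)^*\dot x\} = \inner{2Cx}{\dot x}$ we get $\nabla\barf(x) = 2Cx$ and, differentiating once more, $\nabla^2\barf(x) = 2C$ (constant). With $\T_x\Cn_1$ as in~\eqref{eq:tangentspace}, the orthogonal projector onto the tangent space acts entrywise, $\Proj_x(v)_i = v_i - \Re\{v_i\bar x_i\}x_i$ (using $|x_i| = 1$), so the complementary (normal) projector sends $v$ to the vector with $i$th entry $\Re\{v_i\bar x_i\}x_i$. The only identity needed here is $(Cxx^*)_{ii} = (Cx)_i\bar x_i$, which shows the normal component of $\nabla\barf(x)$ equals $2\Re\{\ddiag(Cxx^*)\}x$. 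Hence
\begin{align*}
	\grad f(x) = \Proj_x(2Cx) = 2Cx - 2\Re\{\ddiag(Cxx^*)\}x = -2\big(\Re\{\ddiag(Cxx^*)\} - C\big)x = -2S(x)x,
\end{align*}
and $\grad f(x) = 0$ is precisely~\eqref{eq:criticalcondition}.

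For the second-order condition I would use the embedded-submanifold Hessian formula $\Hess f(x)[\dot x] = \Proj_x\big(\nabla^2\barf(x)[\dot x]\big) + \mathcal{W}_x(\dot x, \eta)$, where $\eta$ is the normal component of $\nabla\barf(x)$ and $\mathcal{W}_x$ is the Weingarten map of $\Cn_1$ at $x$. Because $\Cn_1$ is a product of unit circles, $\mathcal{W}_x$ acts entrywise: for a normal vector with $i$th entry $\lambda_i x_i$ ($\lambda_i \in \reals$) and a tangent $\dot x$, one has $\mathcal{W}_x(\dot x, \eta)_i = -\lambda_i\dot x_i$. Here $\lambda_i = 2\Re\{(Cx)_i\bar x_i\}$, so contracting with $\dot x$ and adding the projected Euclidean-Hessian term $\inner{\dot x}{\Proj_x(2C\dot x)} = 2\dot x^*C\dot x$ yields
\begin{align*}
	\inner{\dot x}{\Hess f(x)[\dot x]} = 2\dot x^*C\dot x - 2\sum_{i=1}^{n}\Re\{(Cx)_i\bar x_i\}|\dot x_i|^2 = -2\inner{\dot x}{S(x)\dot x}.
\end{align*}
Negative semidefiniteness of the Riemannian Hessian on $\T_x\Cn_1$ is then exactly~\eqref{eq:socpcondition}. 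As a self-contained alternative that avoids naming the Weingarten map, the same two conclusions follow by restricting $f$ to the curve $x(t)$ with $x(t)_i = x_i e^{it\omega_i}$, where $\omega_i\in\reals$ is chosen so that $\dot x(0) = \dot x$ (this sweeps out all of $\T_x\Cn_1$), expanding $f(x(t)) = \sum_{j,k}\bar x_j C_{jk} x_k e^{it(\omega_k - \omega_j)}$, and imposing $\tfrac{d}{dt}f(x(t))\big|_{0} = 0$ and $\tfrac{d^2}{dt^2}f(x(t))\big|_{0} \leq 0$.

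I expect the one genuinely delicate point to be the curvature (Weingarten) contribution to the Riemannian Hessian: it is precisely what produces the diagonal matrix $\Re\{\ddiag(Cxx^*)\}$ inside $S(x)$, and omitting it would leave only the (in general indefinite) Euclidean Hessian $2C$. Everything else—the entrywise projector formula and matching $\grad f$ and $\Hess f$ against the definition~\eqref{eq:S} of $S$—is routine bookkeeping with complex conjugates.
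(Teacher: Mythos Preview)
Your proposal is correct and follows exactly the approach the paper points to: the paper's own proof is merely a citation to \cite[\S4.3]{bandeira2014tightness} and to \cite[eq.~(3.36),~(5.15)]{AMS08} for the Riemannian gradient and Hessian, and you have carried out precisely that computation in detail. The identification $\grad f(x) = -2S(x)x$ and $\inner{\dot x}{\Hess f(x)[\dot x]} = -2\inner{\dot x}{S(x)\dot x}$ via the entrywise projector and the Weingarten (or, equivalently, the curve $x(t)_i = x_i e^{it\omega_i}$) is the standard derivation, so there is nothing to add.
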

\begin{proof}
	See~\cite[\S4.3]{bandeira2014tightness}. See~\cite[eq.~(3.36),~(5.15)]{AMS08} for definitions of the Riemannian gradient and Hessian.
\end{proof}
We single out a few properties of these candidate optima.
\begin{lemma} \label{lem:critsocpproperties}
	$x\in\Cn_1$ is a critical point for~\eqref{eq:P} if and only if $\diag(Cxx^*)$ is real.  If $\diag(C) \geq 0$ and $x$ is a second-order critical point, then $\diag(Cxx^*) \geq 0$ and $x^*Cx = \|Cx\|_1$.
\end{lemma}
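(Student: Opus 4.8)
The plan is to reduce everything to a coordinatewise computation with $S(x)$~\eqref{eq:S}. Since $(Cxx^*)_{ii} = (Cx)_i\bar x_i$, the matrix $\Re\{\ddiag(Cxx^*)\}$ is diagonal with entries $\Re\{(Cx)_i\bar x_i\}$, so I would first expand
\begin{align*}
	(S(x)x)_i = \Re\{(Cx)_i\bar x_i\}\, x_i - (Cx)_i.
\end{align*}
Multiplying this identity by $\bar x_i$ and using $|x_i| = 1$ shows that $(S(x)x)_i = 0$ is equivalent to $(Cx)_i\bar x_i = \Re\{(Cx)_i\bar x_i\}$, i.e.\ to $(Cxx^*)_{ii}\in\reals$; conversely, if $(Cxx^*)_{ii}\in\reals$, then multiplying $(Cx)_i\bar x_i = \Re\{(Cx)_i\bar x_i\}$ by $x_i$ recovers $(S(x)x)_i = 0$. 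Since this is an equivalence entry by entry, the first-order condition~\eqref{eq:criticalcondition} holds if and only if $\diag(Cxx^*)$ is real, which is the first claim.

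For the second claim, assume $\diag(C)\geq 0$ and that $x$ is second-order critical; in particular $x$ is critical, so $\diag(Cxx^*)$ is real by the first part. I would test the second-order condition~\eqref{eq:socpcondition} against the single-coordinate tangent vectors $\dot x$ with $\dot x_k = i x_k$ and $\dot x_j = 0$ for $j\neq k$. One checks $\Re\{\dot x_k\bar x_k\} = \Re\{i\} = 0$, so $\dot x\in\T_x\Cn_1$~\eqref{eq:tangentspace}, and $\inner{\dot x}{S(x)\dot x} = |\dot x_k|^2 S(x)_{kk} = S(x)_{kk}$. Because $x$ is critical, $S(x)_{kk} = (Cxx^*)_{kk} - C_{kk}$ (the real part is now superfluous), so~\eqref{eq:socpcondition} yields $(Cxx^*)_{kk} \geq C_{kk} \geq 0$ for every $k$, i.e.\ $\diag(Cxx^*)\geq 0$. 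Finally, since each $(Cxx^*)_{ii} = (Cx)_i\bar x_i$ is real and nonnegative and $|x_i| = 1$, we get $x^*Cx = \sum_i (Cxx^*)_{ii} = \sum_i |(Cx)_i\bar x_i| = \sum_i |(Cx)_i| = \|Cx\|_1$.

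There is no serious obstacle: the only points needing a little care are correctly identifying the tangent space $\T_x\Cn_1$, so that the single-coordinate perturbation $\dot x_k = i x_k$ is admissible, and keeping track of which quantities are automatically real — the diagonal of $C$ because $C$ is Hermitian, and the diagonal of $Cxx^*$ because $x$ is a critical point. Everything else follows from the routine expansion of $S(x)$ above.
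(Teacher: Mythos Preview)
Your proof is correct and follows essentially the same route as the paper: expand $(S(x)x)_i$ and multiply by $\bar x_i$ to obtain the equivalence with $(Cx)_i\bar x_i\in\reals$, then test~\eqref{eq:socpcondition} on the single-coordinate tangent vectors $\dot x = (i x_k)e_k$ to get $S(x)_{kk}\geq 0$, hence $(Cxx^*)_{kk}\geq C_{kk}\geq 0$, from which $x^*Cx=\|Cx\|_1$ follows. The only difference is cosmetic: you spell out the final summation $x^*Cx=\sum_i|(Cx)_i|$ a bit more explicitly than the paper does.
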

\begin{proof}
	The first statement follows from the definition of $S$~\eqref{eq:S}:
	\begin{align*}
		Sx = 0 \iff \forall i, \Re\{(Cx)_i \bar x_i\}x_i = (Cx)_i \iff \forall i, \Re\{(Cx)_i \bar x_i\} = (Cx)_i \bar x_i.
	\end{align*}
	(To establish the second equivalence, multiply by $\bar x_i$ on both sides and use $|x_i|=1$.)
	For the second statement, assume $x$ is a second-order critical point. Then, for any canonical basis vector $e_i \in \Rn$, this is a tangent vector: $\dot x = (jx_i)e_i$.
	Following~\eqref{eq:socpcondition},
	\begin{align*}
		0 \leq \dot x^*S\dot x = |jx_i|^2 \cdot  e_i^* S e_i = S_{ii} = (Cx)_i \bar x_i - C_{ii}.
	\end{align*}
	Thus, $\diag(Cxx^*) \geq \diag(C) \geq 0$. Consequently, $(Cx)_i \bar x_i = |(Cx)_i|$ for all $i$.
\end{proof}
As an aside, we note a strong link between the fixed points of GPM (who depend on $\alpha$) and the critical points of~\eqref{eq:P} (who do not).
\begin{lemma} \label{lem:linkfixedsocp}
	All fixed points of $T$~\eqref{eq:T} are critical points of~\eqref{eq:P}.
	If $\diag(\tilde C) \geq 0$ (as ensured in Algorithm~\ref{algo:ppm} by forcing $\tilde C \succeq 0$), then all second-order critical points of~\eqref{eq:P} are fixed points of $T$.
\end{lemma}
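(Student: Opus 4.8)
The plan is to chain together the two structural lemmas just established, Lemmas~\ref{lem:fixedpointl1norm} and~\ref{lem:critsocpproperties}, using the key observation that both the matrix $S(x)$ in~\eqref{eq:S} and the criticality conditions for~\eqref{eq:P} are unaffected when $C$ is replaced by the shifted matrix $\tilde C = C + \alpha I_n$. With that in hand, each of the two implications reduces to bookkeeping about which diagonal expression is real, respectively real and nonnegative.

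For the first implication, I would take $x$ a fixed point of $T$~\eqref{eq:T}. By Lemma~\ref{lem:fixedpointl1norm}$(b)$, the number $(\tilde Cx)_i\bar x_i = |(\tilde Cx)_i|$ is real for every $i$. Since $(\tilde Cx)_i\bar x_i = (Cx)_i\bar x_i + \alpha|x_i|^2 = (Cx)_i\bar x_i + \alpha$ and $\alpha\in\reals$, it follows that $(Cx)_i\bar x_i$ is real for all $i$, that is, $\diag(Cxx^*)$ is real. Lemma~\ref{lem:critsocpproperties} then gives directly that $x$ is a critical point of~\eqref{eq:P}.

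For the second implication, I would first record the shift-invariance: $\Re\{\ddiag(\tilde Cxx^*)\} = \Re\{\ddiag(Cxx^*)\} + \alpha I_n$ while $\tilde C = C + \alpha I_n$, so the two $\alpha I_n$ contributions cancel and $S(x) = \Re\{\ddiag(\tilde Cxx^*)\} - \tilde C$. Likewise $x\mapsto x^*\tilde Cx$ differs from $f(x) = x^*Cx$ only by the additive constant $\alpha n$ on $\Cn_1$, hence the two problems have the same Riemannian gradient and Hessian and therefore the same critical points and the same second-order critical points. Now take $x$ a second-order critical point of~\eqref{eq:P}; under the hypothesis $\diag(\tilde C)\geq 0$, Lemma~\ref{lem:critsocpproperties} applied with $\tilde C$ in place of $C$ yields $x^*\tilde Cx = \|\tilde Cx\|_1$, which is exactly condition $(c)$ of Lemma~\ref{lem:fixedpointl1norm}. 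Hence $x$ is a fixed point of $T$.

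The only delicate point is the diagonal-shift argument: one must verify that $S$ is literally the same Hermitian matrix whether built from $C$ or from $\tilde C$, and that being a (second-order) critical point of~\eqref{eq:P} is a property shared with $\max_{x\in\Cn_1} x^*\tilde Cx$. Both verifications are immediate once written out, so I do not anticipate any genuine obstacle; the proof is essentially a clean composition of the two preceding lemmas.
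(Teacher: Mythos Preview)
Your proof is correct and follows essentially the same route as the paper: both parts chain Lemma~\ref{lem:fixedpointl1norm} with Lemma~\ref{lem:critsocpproperties} via the shift-invariance of $S$ under $C\mapsto\tilde C$. The only cosmetic difference is that for the second implication the paper re-runs the $\dot x=(jx_i)e_i$ argument from the proof of Lemma~\ref{lem:critsocpproperties} to get $\diag(\tilde Cxx^*)\geq 0$ and then invokes Lemma~\ref{lem:fixedpointl1norm}$(b)$, whereas you invoke the statement of Lemma~\ref{lem:critsocpproperties} as a black box on the shifted problem and land on condition $(c)$; both are equivalent.
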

\begin{proof}
	The first statement is clear:
	\begin{align*}
		x = T(x) \stackrel{(Lemma~\ref{lem:fixedpointl1norm})}{\implies} \diag(\tilde Cxx^*) \textrm{ is real} \iff \diag(Cxx^*) \textrm{ is real} \stackrel{(Lemma~\ref{lem:critsocpproperties})}{\iff} Sx = 0.
	\end{align*}
	For the second statement, consider the second argument in the proof of Lemma~\ref{lem:critsocpproperties} with the equivalent definition of $S$ using $\tilde C$: $S = \ddiag(\tilde Cxx^*) - \tilde C$ and $x$ second-order critical. The argument states $0 \leq \diag(S) = \diag(\tilde Cxx^*) - \diag(\tilde C)$. Assuming $\diag(\tilde C) \geq 0$, this shows $\diag(\tilde Cxx^*) \geq 0$, which by Lemma~\ref{lem:fixedpointl1norm} implies $x$ is a fixed point.
\end{proof}

All second-order critical points of~\eqref{eq:P} are close to the signal $z$.
\begin{lemma}\label{lem:SOCPclosetoz}
	If $\diag(C) \geq 0$ and $\opnorm{\Delta} \leq n/13$, then all second-order critical points $x$ of~\eqref{eq:P} obey
	\begin{align*}
		|z^*x| \geq n - 4\opnorm{\Delta}.
	\end{align*}
	Thus, up to phase, they are all close to $z$ as $d(z, x)^2 \leq 8 \opnorm{\Delta}.$
\end{lemma}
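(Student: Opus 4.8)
The plan is to split the argument into two phases. First, I would extract a good/bad dichotomy for $|z^*x|$ by mimicking the proof of Lemma~\ref{lem:fixedpointsgoodbad} with $\alpha = 0$. Then I would use the second-order necessary condition to eliminate the bad alternative, by testing the Hessian $S(x)$ against two carefully chosen tangent directions.

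Phase one is essentially free. Since $\diag(C) \geq 0$ and $x$ is second-order critical, Lemma~\ref{lem:critsocpproperties} gives that $\diag(Cxx^*)$ is real and nonnegative and $x^*Cx = \|Cx\|_1$; in particular $(Cx)_i\bar x_i = |(Cx)_i|$ for every $i$, and $S := S(x) = \ddiag(Cxx^*) - C$ satisfies $Sx = 0$. The identity $x^*Cx = \|Cx\|_1$ is precisely the $\alpha = 0$ instance of the relation used in the proof of Lemma~\ref{lem:fixedpointsgoodbad}, so comparing the upper bound $x^*Cx = |z^*x|^2 + x^*\Delta x \leq |z^*x|^2 + n\opnorm{\Delta}$ with the lower bound $\|Cx\|_1 \geq n|z^*x| - \|\Delta x\|_1 \geq n|z^*x| - n\opnorm{\Delta}$ and running that argument verbatim with $\alpha = 0$ yields $|z^*x|(n - |z^*x|) \leq 2n\opnorm{\Delta}$, hence (as $\opnorm{\Delta} \leq n/13 < n/8$) either $|z^*x| \geq n - 4\opnorm{\Delta}$ or $|z^*x| \leq 4\opnorm{\Delta}$.

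Phase two rules out $|z^*x| \leq 4\opnorm{\Delta}$. Set $c_i := \Re\{\bar z_i x_i\}$ and $a := \sum_i c_i^2 \in [0,n]$, and consider the two tangent vectors $u$ with $u_i = z_i - c_i x_i$ (the entrywise projection of $z$ onto $\T_x\Cn_1$) and $v$ with $v_i = i c_i x_i$; both are readily checked to lie in $\T_x\Cn_1$. Expanding $w^*Sw = \sum_i |w_i|^2 |(Cx)_i| - w^*Cw$ for $w \in \{u,v\}$, using $\sum_i |w_i|^2 |(Cx)_i| \leq \|Cx\|_1 = x^*Cx$ (valid since $|u_i|^2 = 1 - c_i^2 \leq 1$ and $|v_i|^2 = c_i^2 \leq 1$), $w^*Cw = |z^*w|^2 + w^*\Delta w \geq |z^*w|^2 - n\opnorm{\Delta}$, and $|z^*u|^2 \geq (n-a)^2$, $|z^*v|^2 \geq a^2$, I get
\[
u^*Su \leq x^*Cx + n\opnorm{\Delta} - (n-a)^2, \qquad v^*Sv \leq x^*Cx + n\opnorm{\Delta} - a^2 .
\]
Adding these, using $(n-a)^2 + a^2 \geq n^2/2$ and the bad-case bound $x^*Cx \leq 16\opnorm{\Delta}^2 + n\opnorm{\Delta}$, gives $u^*Su + v^*Sv \leq 32\opnorm{\Delta}^2 + 4n\opnorm{\Delta} - n^2/2$, which is strictly negative once $\opnorm{\Delta} \leq n/13$. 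This contradicts the second-order condition of Lemma~\ref{lem:necessaryconditions}, which forces both $u^*Su \geq 0$ and $v^*Sv \geq 0$. Hence only $|z^*x| \geq n - 4\opnorm{\Delta}$ survives, and then $d(z,x)^2 = 2(n - |z^*x|) \leq 8\opnorm{\Delta}$.

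The main obstacle is isolating the right pair of test directions. A single natural choice — the projection $u$ of $z$ onto the tangent space — is worthless precisely when $x$ agrees entrywise with $\pm z$ up to small phases (all $|c_i| \approx 1$, so $\|u\| \approx 0$), and it is exactly in that regime that the "swung" radial direction $v$ becomes large; the elementary identity $(n-a)^2 + a^2 \geq n^2/2$ is what lets the two estimates be added into a bound that is negative for every value of $a$, avoiding a case split. One should also expect the constant $1/13$ to be essentially forced: the final slack is only of order $n^2/338$, so every $\opnorm{\Delta}$-versus-$n$ comparison in the computation must be kept sharp.
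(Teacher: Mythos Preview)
Your proof is correct. Phase one reproduces Lemma~\ref{lem:socpgoodbad} exactly as in the paper. Phase two, however, is genuinely different from the paper's argument, and worth a brief comparison.

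The paper rules out the bad alternative $|z^*x| \leq 4\opnorm{\Delta}$ by testing $S(x)$ against a \emph{single} tangent vector of the form $\dot x = s \odot (jx)$ with $s \in \{\pm 1\}^n$, and then proving the combinatorial fact that some choice of signs achieves $|z^*(s\odot x)|^2 \geq n^2/4$. You instead pick two \emph{explicit} tangent directions---the tangential projection $u$ of $z$ and the swung radial direction $v$---and add the two Hessian inequalities, trading the sign-selection argument for the elementary bound $(n-a)^2 + a^2 \geq n^2/2$. Both routes land on the identical quadratic condition $64c^2 + 8c < 1$ with $\opnorm{\Delta} = cn$, so neither is sharper; yours avoids the combinatorial step, while the paper's single test vector makes the ``local-to-global'' interpretation (a second-order critical $x$ beats every $s\odot x$) more transparent. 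Your verification that $|u_i|^2 = 1-c_i^2 \leq 1$, $|v_i|^2 = c_i^2 \leq 1$, $|z^*u|^2 \geq (n-a)^2$ and $|z^*v|^2 \geq a^2$ is correct (write $\bar z_i x_i = c_i + j d_i$ and compute), as is the use of $\|Cx\|_1 = x^*Cx$ from Lemma~\ref{lem:critsocpproperties}.
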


In order to prove this lemma, we first need a technical result akin to Lemma~\ref{lem:fixedpointsgoodbad}. We show all second-order critical points are either close to $z$, or far away from $z$. We will then conclude by showing there can be no second-order critical points far away from $z$.

\begin{lemma} \label{lem:socpgoodbad}
	If $\diag(C) \geq 0$ and $x$ is a second-order critical point for~\eqref{eq:P}, (at least) one of the following holds:
	\begin{align*}
	|z^*x|  \geq n - 4\opnorm{\Delta}, & & \textrm{ or } & & |z^*x| \leq 4\opnorm{\Delta}.
	\end{align*}
	Exactly one statement holds if $\opnorm{\Delta} < n/8$.
\end{lemma}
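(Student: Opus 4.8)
The structure will mirror the proof of Lemma~\ref{lem:fixedpointsgoodbad}: derive a quadratic inequality in $|z^*x|$ whose two admissible intervals give the dichotomy, but now the upper and lower bounds on $x^*Cx$ come from the first- and second-order optimality conditions rather than from the fixed-point equation. The plan is to use Lemma~\ref{lem:critsocpproperties}, which under $\diag(C) \geq 0$ tells us that a second-order critical point $x$ satisfies $x^*Cx = \|Cx\|_1$. This is the analog of statement $(c)$ in Lemma~\ref{lem:fixedpointl1norm}, so the same estimates will apply essentially verbatim with $\tilde C$ replaced by $C$ and $\alpha$ removed.

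Concretely, write $C = zz^* + \Delta$. For the upper bound, $x^*Cx = |z^*x|^2 + x^*\Delta x \leq |z^*x|^2 + n\opnorm{\Delta}$. For the lower bound, use $x^*Cx = \|Cx\|_1 = \sum_{i=1}^n |(z^*x)z_i + (\Delta x)_i| \geq n|z^*x| - \|\Delta x\|_1 \geq n|z^*x| - n\opnorm{\Delta}$, where the last step uses $\|\Delta x\|_1 \leq \sqrt n \|\Delta x\|_2 \leq n\opnorm{\Delta}$ as before. Combining the two bounds gives
\begin{align*}
	|z^*x|(n - |z^*x|) \leq 2n\opnorm{\Delta}.
\end{align*}
This is exactly inequality~\eqref{eq:quadraticinequality} with $\alpha = 0$, so the remainder of the argument is identical: the left-hand side is at most $n^2/4$, so the bound is informative only when $\opnorm{\Delta} < n/8$; examining the roots $r_\pm = \frac n2(1 \pm \sqrt{1 - 8\opnorm{\Delta}/n})$ of the concave quadratic and applying $\sqrt{1-s} \geq 1-s$ on $[0,1]$ twice yields either $|z^*x| \geq r_+ \geq n - 4\opnorm{\Delta}$ or $|z^*x| \leq r_- \leq 4\opnorm{\Delta}$. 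When $\opnorm{\Delta} < n/8$ the two intervals are disjoint, so exactly one holds.

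The only genuinely new ingredient is the identity $x^*Cx = \|Cx\|_1$ for second-order critical points, and the hypothesis $\diag(C) \geq 0$ is precisely what makes it available (via $S_{ii} = (Cx)_i\bar x_i - C_{ii} \geq 0$, combined with criticality forcing $(Cx)_i\bar x_i$ to be real, hence $(Cx)_i\bar x_i = |(Cx)_i|$). So I do not expect a real obstacle here; the work is already done in Lemma~\ref{lem:critsocpproperties}. The subsequent Lemma~\ref{lem:SOCPclosetoz} will then need the extra step of ruling out the "bad" branch $|z^*x| \leq 4\opnorm{\Delta}$ entirely, which is where the second-order condition~\eqref{eq:socpcondition} must be used more quantitatively (testing the Hessian against a vector aligned with $z$), but that is the content of the next lemma, not this one.
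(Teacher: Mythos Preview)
Your proposal is correct and matches the paper's own proof essentially verbatim: the paper simply states that the argument is identical to that of Lemma~\ref{lem:fixedpointsgoodbad} with $C$ in place of $\tilde C$ (i.e., $\alpha = 0$), the key input being $x^*Cx = \|Cx\|_1$ from Lemma~\ref{lem:critsocpproperties}. Your identification of where the hypothesis $\diag(C) \geq 0$ enters and your remark that ruling out the bad branch is deferred to Lemma~\ref{lem:SOCPclosetoz} are both accurate.
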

\begin{proof}
	The proof is identical to that of Lemma~\ref{lem:fixedpointsgoodbad}, with $C$ instead of $\tilde C$ ($\alpha = 0$). The key property is $x^*Cx = \|Cx\|_1$, provided by Lemma~\ref{lem:critsocpproperties}.
\end{proof}

\begin{proof}[Proof of Lemma~\ref{lem:SOCPclosetoz}]
	Let $x\in\Cn_1$ be a second-order critical point of~\eqref{eq:P} such that $|z^*x| \leq 4 \opnorm{\Delta}$. We show such a point does not exist.
	
	Condition~\eqref{eq:socpcondition} holds for $x$. Consider tangent vectors $\dot x \in \T_x\Cn_1$~\eqref{eq:tangentspace} of the form $\dot x_i = s_i \cdot (jx_i)$, where $s_i = \pm 1$ for all $i$ (we will determine these signs momentarily).

	Let $\odot$ denote entry-wise multiplication in $\dot x = s \odot (jx)$, with $s \in \{\pm 1\}^n$. By direct computation,
	\begin{align}
		0 \leq \dot x^* S \dot x & = x^*Cx - (s \odot x)^*C(s \odot x) \nonumber\\
			& = |z^*x|^2 + x^*\Delta x - (s\odot x)^*\Delta(s\odot x) - |z^*(s\odot x)|^2 \nonumber\\
			& \leq 16 \opnorm{\Delta}^2 + 2n\opnorm{\Delta} - |z^*(s\odot x)|^2.
			\label{eq:xdotSxdotnegative}
	\end{align}
	(Notice that the first line shows a second-order critical point $x$ outperforms any point $s \odot x \in \Cn_1$ in terms of $f$, which is a kind of local-to-global statement.)
	The goal is to show the signs $s$ can be chosen such that $\dot x^* S \dot x < 0$, to reach a contradiction. To this end, consider:
	\begin{align*}
		\max_{s\in\{ \pm 1 \}^n} |z^*(s\odot x)|^2 & = \max_{s\in\{ \pm 1 \}^n} \Big| \sum_i s_i \bar z_i x_i \Big|^2 \\
		& = \max_{s\in\{ \pm 1 \}^n} \Re\Big\{ \sum_i s_i \bar z_i x_i \Big\}^2 + \Im\Big\{ \sum_i s_i \bar z_i x_i \Big\}^2 \\
		& = \max_{s\in\{ \pm 1 \}^n} \Big( \sum_i s_i \cos\theta_i \Big)^2 + \Big( \sum_i s_i \sin \theta_i  \Big)^2,
	\end{align*}
	where $e^{j\theta_i} := \bar z_i x_i$. Since both terms in the max are nonnegative, it further holds that
	\begin{align*}
	\max_{s\in\{ \pm 1 \}^n} |z^*(s\odot x)|^2 & \geq \max\left( \max_{s\in\{ \pm 1 \}^n} \Big( \sum_i s_i \cos\theta_i \Big)^2 , \max_{s\in\{ \pm 1 \}^n} \Big( \sum_i s_i \sin \theta_i  \Big)^2 \right) \\
	& = \max \left( \sum_i |\cos \theta_i|, \sum_i |\sin \theta_i| \right)^2 \\
	& \geq \left( \frac{1}{2}\sum_i |\cos \theta_i| + |\sin \theta_i| \right)^2 \geq \frac{1}{4}n^2,
	\end{align*}
	where we used $\max(a, b) \geq \frac{a+b}{2}$ for all $a, b\in\reals$ and $|\cos \theta| + |\sin \theta| \geq 1$ for all $\theta\in\reals$. (This last result says that, given any $x\in\Cn_1$, it is possible to pick signs $s$ such that $s \odot x$ correlates with $z$.) Plugging the optimal $s$ in~\eqref{eq:xdotSxdotnegative}, we find that if
	\begin{align}
		16 \opnorm{\Delta}^2 + 2n\opnorm{\Delta} < \frac{1}{4}n^2,
		\label{eq:intermediate3453}
	\end{align}
	then we reached a contradiction and $x$ is not second-order critical. Let $\opnorm{\Delta} \leq cn$ for some $c \geq 0$ to be determined. Eq.~\eqref{eq:intermediate3453} holds if $64c^2 + 8c < 1$. This holds in particular if $c \leq 1/13$, concluding the proof.
\end{proof}

The collected results of this section allow to prove the main theorem about equivalence of second-order critical points and optima of~\eqref{eq:P} (under the proposed regime for $\Delta$.) The cycle of implications is: $x$ optimal $\stackrel{\textrm{Lemma~\ref{lem:necessaryconditions}}}{\implies}$ $x$ second-order critical $\stackrel{\textrm{see below}}{\implies}$ $S(x) \succeq 0$ $\stackrel{\textrm{Lemma~\ref{lemma:sufficientS}}}{\implies}$ $x$ optimal.

\begin{proof}[Proof of Theorem~\ref{thm:socpglobalopt}]
	The proof is essentially that of Theorem~\ref{thm:ppmglobalopt}. Under the assumptions, a second-order critical point $x$ satisfies $Sx = 0$, $(Cx)_i\bar x_i = |(Cx)_i|$ (Lemma~\ref{lem:critsocpproperties}) and $|z^*x| \geq n - 4\opnorm{\Delta}$ (Lemma~\ref{lem:SOCPclosetoz})---we aim to show $S(x)\succeq 0$. For all $u\in\Cn$ such that $u^*x = 0$ (without loss of generality, assume $z^*x = |z^*x|$),
	\begin{align*}
	u^*Su & = \sum_{i=1}^{n} |u_i|^2 |(Cx)_i| - u^*Cu \\
	& \geq \|u\|_2^2 \left( |z^* x| - \|\Delta x\|_\infty - \|z-x\|_2^2 - \opnorm{\Delta} \right) \\
	& \geq \|u\|_2^2 \left( n - 13\opnorm{\Delta} - \|\Delta x\|_\infty \right).
	\end{align*}
	Assume $\opnorm{\Delta} \leq cn$ and $\|\Delta z\|_\infty \leq cn\sqrt{\log n}$ for some $c$. By Lemma~\ref{lem:Deltaxinfty}, $\|\Delta x\|_\infty \leq (\sqrt{\log n} + \sqrt{8cn} ) cn$. By Lemma~\ref{lemma:sufficientS}, a sufficient condition to establish that all second-order critical points are globally optimal, and uniqueness of the global optimum up to phase, becomes $1 - (13 + \sqrt{\log n} + \sqrt{8cn})c > 0$. If $c = c'n^{-1/3}$, since $n^{-1/3}(13+\sqrt{\log n}) \leq 13$, a sufficient condition is $1 - (13 + \sqrt{8c'})c' > 0$. This is satisfied for $c' = 1/14$.
\end{proof}

Note that, as a by-product, the proof of Theorem~\ref{thm:socpglobalopt} controls the extreme eigenvalues of $S$ at global optima (aside from the trivial eigenvalue corresponding to the global phase indeterminacy). It can be shown that the ratio of the largest to smallest positive eigenvalues of $S$ upper bounds
the condition number of the Riemannian Hessian of $f$ on the quotient space $\Cn_1 / \!\! \sim$. As a result, the smaller $\Delta$, the closer the condition number is to 1, and the faster the local convergence of classical Riemannian optimization algorithms~\cite[Thm.\,7.4.11]{AMS08}.

\section{Numerical experiments}\label{sec:XP}

Following the Gaussian noise setup of Lemma~\ref{lemma:Wignernoise}---identical to the experimental setup in~\cite{bandeira2014tightness}---we generate, independently for various values of $n$ and $\sigma$, 100 independent noise realizations $W$ and a uniformly random signal $z \in \Cn_1$. For each resulting data matrix $C = zz^* + \sigma W$, we compute the eigenvector estimator $z_{EIG} = \hat v$~\eqref{eq:eig}, an estimator $z_{GPM}$ computed with Algorithm~\ref{algo:ppm}, and an estimator $z_{RTR}$ computed with the Riemannian trust-region algorithm (RTR), via the Manopt toolbox~\cite{genrtr,manopt}. Figures 1--7 report aggregated statistics about the results.

The GPM estimator $z_{GPM}$ is obtained as follows. Parameter $\alpha$ is set to the smallest allowed value,
that is, $\alpha = \max(0, -\lambdamin(C))$. This is computed from data. Then, Algorithm~\ref{algo:ppm}, initialized with $x_0 = z_{EIG}$, iterates until $\frac{x_k^*\tilde C x_k}{\|\tilde C x_k\|_1} \geq 1 - 10^{-7}$. This happens in finite time since the left hand side converges to 1 from below. Average iteration counts are reported in Figure~\ref{fig:GPMiterations}.

The RTR estimator $z_{RTR}$ is obtained as follows. RTR is run out-of-the-box on~\eqref{eq:P} with cost scaled by $1/n^2$, with default parameter values and a \emph{random} initial guess until the Riemannian gradient norm drops below $10^{-6}$, that is, $2\|S(x)x\|_2/n^2 \leq 10^{-6}$. This happens in finite time since the algorithm converges to critical points~\citep{boumal2016globalrates}. Average iteration counts are reported in Figure~\ref{fig:RTRiterations}. In the form we use, RTR is not guaranteed to converge to second-order critical points, but it does so in practice as only such points are stable for the iteration.
Figure~\ref{fig:RTREIGiterations} reports average iteration counts for RTR initialized with the eigenvector estimator. The corresponding Figure~\ref{fig:RTRglobalopt} (described below) is indistinguishable, and hence omitted.

Based on Lemma~\ref{lemma:sufficientS}, global optimality at $x$ is declared (up to numerical accuracy) if $S = S(x)$ is positive semidefinite (up to numerical accuracy). This is declared to be the case if $\lambdamin(S)/|\lambdamax(S)| \geq -10^{-5}$ for $z_{GPM}$, and $-10^{-9}$ for $z_{RTR}$---the difference reflects the faster local convergence of RTR, which allows to reach higher accuracy for little extra effort. Success rates of this global optimality test appear in Figures~\ref{fig:GPMglobalopt} and~\ref{fig:RTRglobalopt} for GPM and RTR, respectively. The figures are essentially indistinguishable.

A phase transition clearly appears. The results suggest $\sigma$ up to $\tilde{\mathcal{O}}(n^{1/2})$ can be handled. Comparing with~\cite[Fig.\,2]{bandeira2014tightness}, it appears that GPM and RTR solve~\eqref{eq:P} for noise levels as large as the semidefinite relaxation can handle, even though both methods scale better than interior point methods in practice. (Figures in~\citep{bandeira2014tightness} were generated with a recent low-rank SDP solver~\citep{boumal2016bmapproach}.)

Figure~\ref{fig:RTRbeatsEIG} displays how often the RTR estimator is closer to $z$ (in the phase-aligned $\ell_2$-sense) than the eigenvector estimator. They appear to be mostly equally good estimators (with a slight advantage for RTR), except for the region where $n$ is large and $\sigma$ is close to but smaller than $\sqrt{n}$. In that challenging regime, $z_{RTR}$ consistently outperforms the simpler estimator $z_{EIG}$.

Finally, Figure~\ref{fig:EIGbeatssignal} displays how often the eigenvector estimator is a more likely estimator than the planted signal itself, that is, $\hat v^* C \hat v > z^* C z$. This figure is provided as a baseline to verify in what regime $\hat v$ is an excellent initialization for any algorithm aimed at solving~\eqref{eq:P}. In particular, initializing RTR with $\hat v$ speeds up computations in practice: compare Figures~\ref{fig:RTRiterations} and~\ref{fig:RTREIGiterations}.

\section{Perspectives and conclusions}

We showed phase synchronization as posed in~\eqref{eq:P} can be solved to global optimality with GPM, a simple algorithm operating directly in $\Cn_1$, under some conditions on the noise. This is more practical than solving~\eqref{eq:P} via semidefinite relaxation in a high dimensional space~\cite{bandeira2014tightness}. The main theorems hold under more restrictive assumptions on the noise than those made in~\cite{bandeira2014tightness}, but numerical experiments suggest GPM (and RTR) succeed in the same regime as the SDP relaxation. Similarly to~\cite{bandeira2014tightness}, the bottleneck in the analysis is Lemma~\ref{lem:Deltaxinfty}. Improving the latter would improve results in both papers.

We further showed that, under some conditions on the noise, second-order necessary optimality conditions are sufficient for~\eqref{eq:P}. In that regime, strong duality holds for~\eqref{eq:P}.
To the best of our knowledge, it is not known whether strong duality generally (for some broad class of nonconvex problems encompassing~\eqref{eq:P}) implies that second-order necessary optimality conditions become sufficient. If this is so, analyses such as presented in Section~\ref{sec:landscape} might be simplified and improved.

A natural extension of this work is to apply it for synchronization of rotations and orthogonal transformations in $\Rd$, for $d > 2$; see~\cite{carmona2012analytical,wang2012LUD,boumal2015staircase,bandeira2013approximating} among others. In this scenario, $C$ is a block-matrix, with off-diagonal blocks of size $d\times d$ being noisy measurements of relative transformations in $\Rd$.
For $d=3$, an alternative to using block matrices is to represent rotations as quaternions~\cite{carmona2012analytical}.
Experiments (not shown) suggest GPM and RTR perform well in this extended setting.

Under non-Gaussian noise, it may be useful to consider alternative cost functions. An interesting one is the \emph{least unsquared deviations} cost studied for synchronization of rotations ($d \geq 2$) by Wang and Singer~\cite{wang2012LUD}. Experiments (not shown) suggest GPM can be adapted to work effectively with this robust (but nonsmooth) cost function via the \emph{iteratively reweighted least squares} approach. This approach bears some resemblance with the Weiszfeld algorithm~\cite{hartley2011l1}. It would be interesting to study the convergence to global optimality of such methods.

Finally, it is interesting to establish rates of convergence, that is, to bound the number of iterations required to reach approximate solutions. For GPM, Figure~\ref{fig:GPMiterations} suggests this number is low in favorable noise regimes. Soon after the first appearance of the present paper (and partially in response to it), Liu et al.~\citep{liu2016statistical} obtained convergence rates for GPM in a similar setting. For RTR, worst-case iteration complexity bounds for the computation of approximate second-order critical points on manifolds are developed in~\cite{boumal2016globalrates}. The results there apply here, since $\Cn_1$ is a compact manifold and $f$ is smooth in $\Cn$. Still, even though they are sharp, the worst-case bounds seem pessimistic in view of the favorable empirical performance.

\section*{Acknowledgments}

I thank P.-A.\ Absil, A.S.\ Bandeira, Q.\ Berthet, A.\ d'Aspre\-mont, D.\ Scieur, A.\ Singer and B.\ Van\-de\-reycken for fruitful discussions, as well as the anonymous reviewers for helpful suggestions. This research was generously supported by the ``Fonds Sp\'eciaux de Recherche'' (FSR) from UCLouvain and, thanks to A.\ d'Aspre\-mont, by the Chaire Havas ``Chaire Eco\-no\-mie et gestion des nouvelles don\-n\'ees,'' the ERC Starting Grant SIPA and a Research in Paris grant.

\begin{figure}[h]
	\begin{center}
		\includegraphics[width=0.9\textwidth]{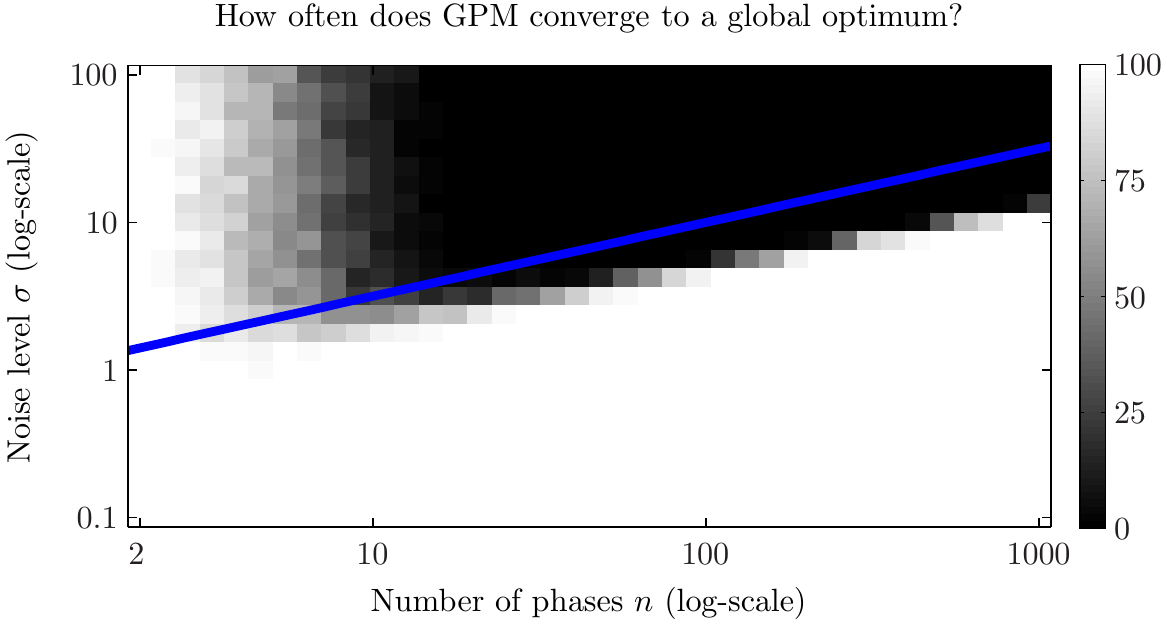}
		\caption{The generalized power method (GPM, Algorithm~\ref{algo:ppm}) identifies a global optimum of~\eqref{eq:P} even for large levels of noise. Global optimality is certified a posteriori up to some numerical tolerance via Lemma~\ref{lemma:sufficientS}. This is partly explained by Theorem~\ref{thm:ppmglobalopt}. In all figures, the blue line marks $\sigma = \sqrt{n}$.}
		\label{fig:GPMglobalopt}
	\end{center}
\end{figure}
\begin{figure}[h]
	\begin{center}
		\includegraphics[width=0.9\textwidth]{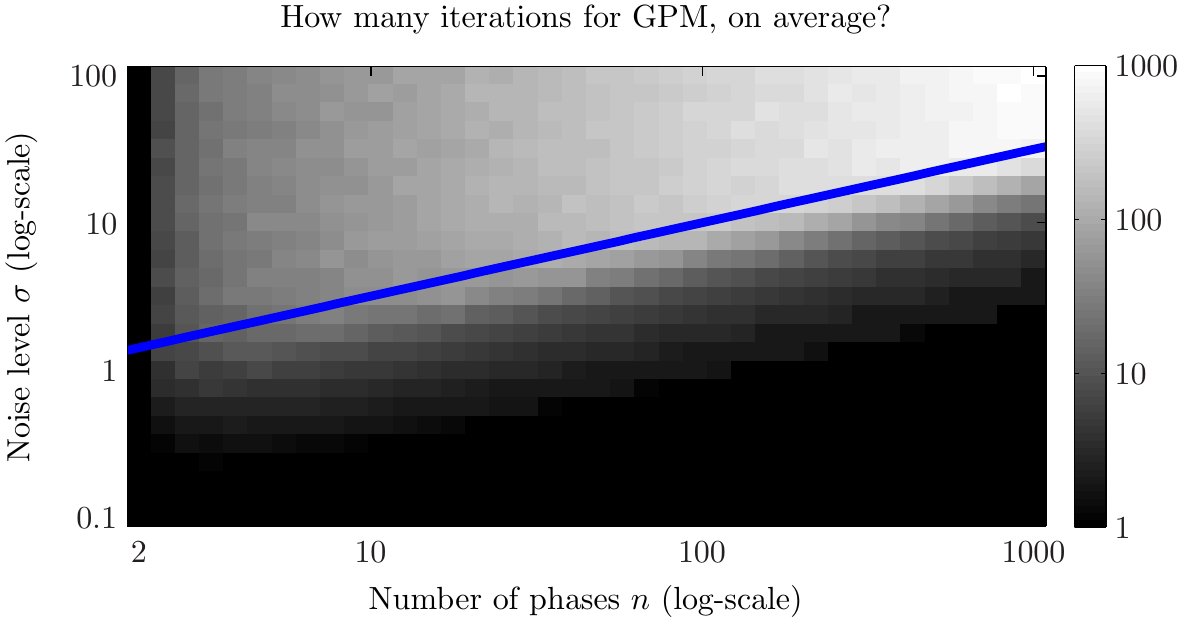}
		\caption{Iteration count of GPM until the stopping criterion triggers, averaged over the 100 repetitions for each pair $(n, \sigma)$. A typical pattern appears, where a simpler statistical task (smaller $\sigma$) translates into a better conditioned optimization problem requiring fewer iterations (see~\cite{roulet2015renegar} for a related discussion).}
		\label{fig:GPMiterations}
	\end{center}
\end{figure}

\begin{figure}[h]
	\begin{center}
		\includegraphics[width=0.9\textwidth]{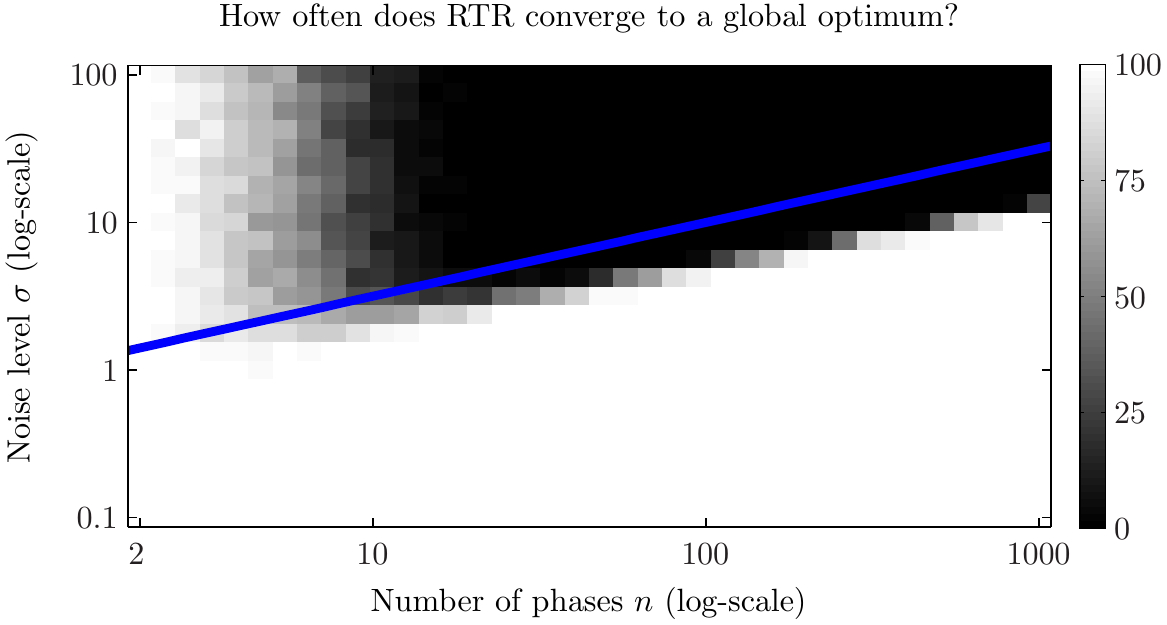}
		\caption{Same as Figure~\ref{fig:GPMglobalopt}, this time with the general purpose Riemannian trust-region algorithm (RTR) instead of GPM. Both algorithms appear to reach global optimality in the same regime. This is partly explained by Theorem~\ref{thm:socpglobalopt}.}
		\label{fig:RTRglobalopt}
	\end{center}
\end{figure}
\begin{figure}[h]
	\begin{center}
		\includegraphics[width=0.9\textwidth]{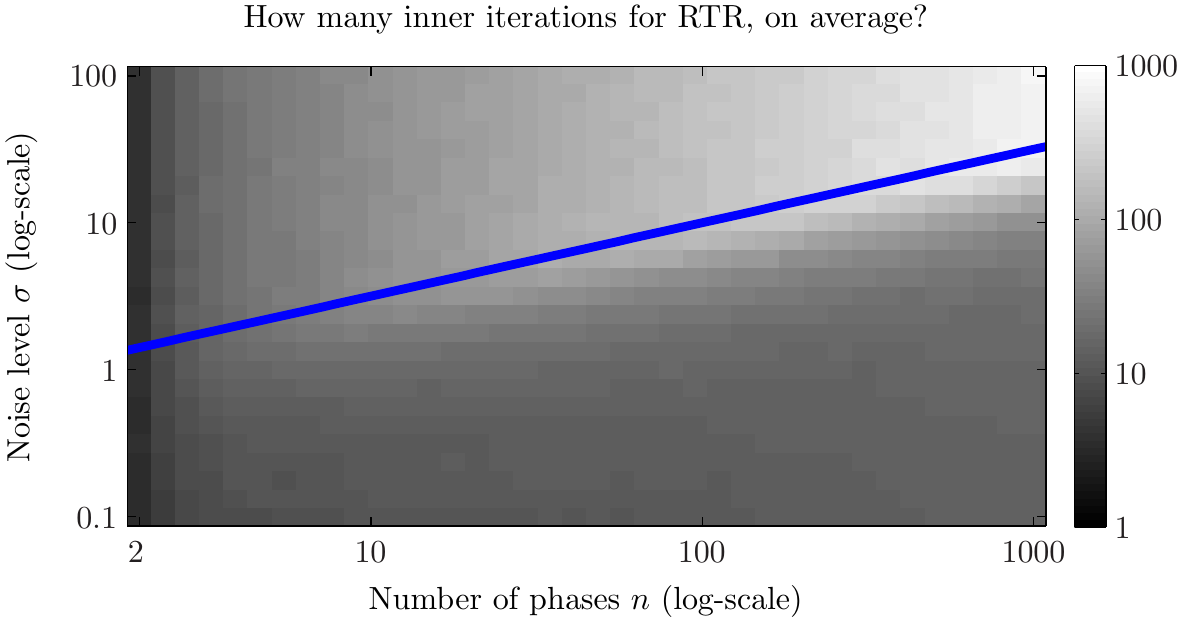}
		\caption{Same as Figure~\ref{fig:GPMiterations}, for the RTR algorithm. The sum of outer and inner iteration counts is close to the number of matrix-vector products with $C$, hence is comparable with iteration counts in GPM. RTR requires more work than GPM below the blue line, but starts from a random initial point and attains higher accuracy. RTR has the advantage of being a general purpose algorithm. On the other hand, GPM is particularly simple compared to RTR.}
		\label{fig:RTRiterations}
	\end{center}
\end{figure}
\begin{figure}[h]
	\begin{center}
		\includegraphics[width=0.9\textwidth]{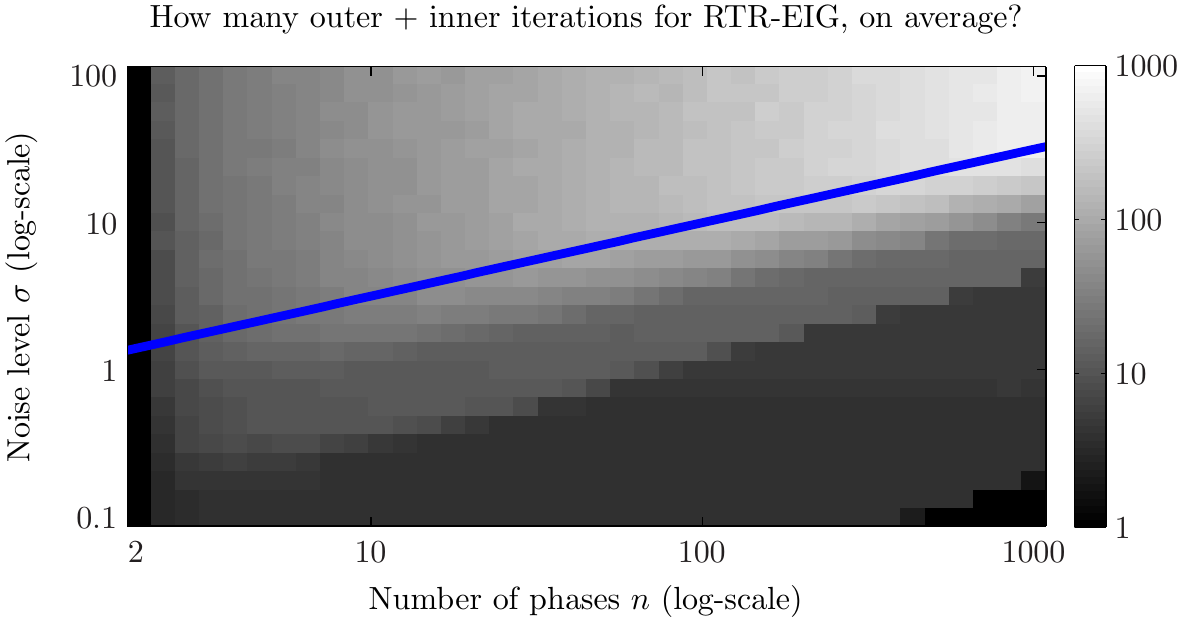}
		\caption{Same as Figure~\ref{fig:RTRiterations}, for the RTR algorithm initialized with the eigenvector estimator. Comparing with Figure~\ref{fig:RTRiterations}, the usefulness of this initialization is clear.}
		\label{fig:RTREIGiterations}
	\end{center}
\end{figure}

\begin{figure}[h]
	\begin{center}
		\includegraphics[width=0.9\textwidth]{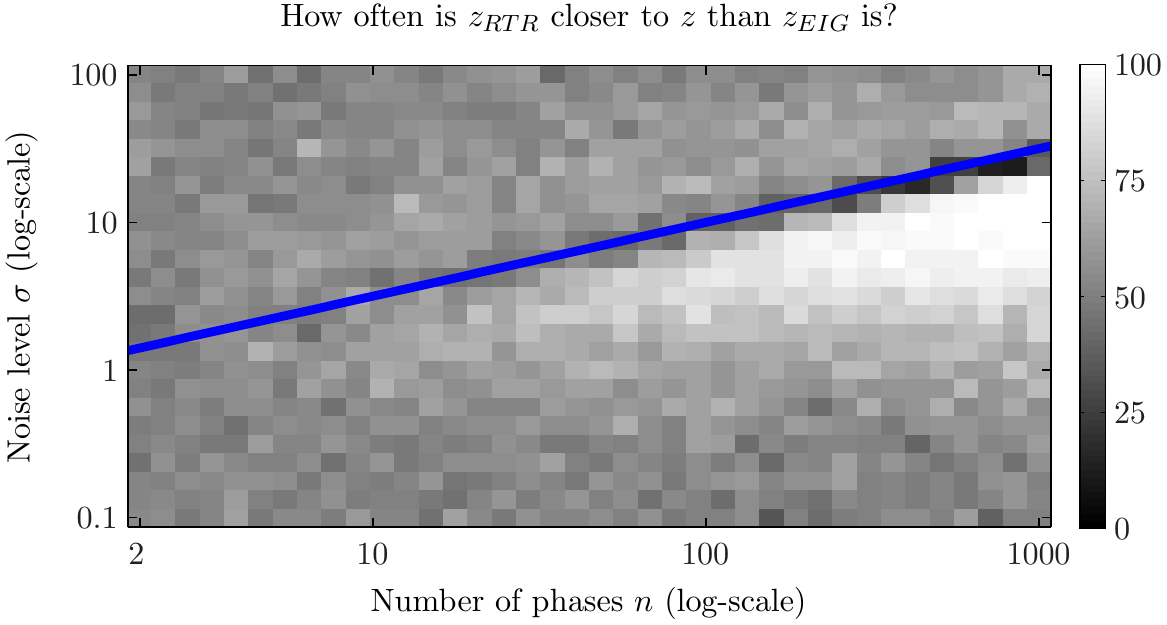}
		\caption{
			For large $n$, close to the phase transition (somewhat below the blue line $\sigma = \sqrt{n}$), $z_{RTR}$ is a better estimator than $z_{EIG}$, in that $d(z, z_{RTR}) < d(z, z_{EIG})$~\eqref{eq:distance}. For smaller noise levels, $z_{RTR}$ only has a mild advantage over $z_{EIG}$. This is consistent with Lemmas~\ref{lemma:ell2bound} and~\ref{lem:eigell2}.}
		\label{fig:RTRbeatsEIG}
	\end{center}
\end{figure}
\begin{figure}[h]
	\begin{center}
		\includegraphics[width=0.9\textwidth]{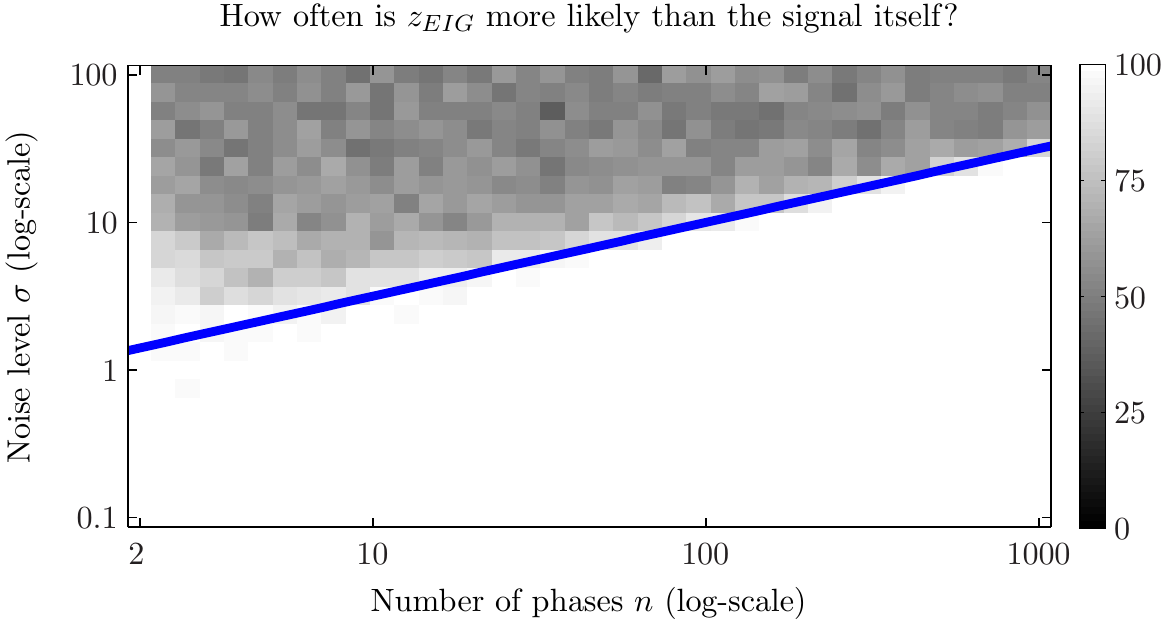}
		\caption{For noise levels $\sigma < \sqrt{n}$ (below the blue line), $z_{EIG}$ empirically attains a higher likelihood as an estimator for $z$ than $z$ itself. This confirms that, in this favorable regime, $z_{EIG}$ is an excellent initialization for any algorithm aiming to solve~\eqref{eq:P}. In particular, it is known that, with high probability, monotonic ascent from such a point to a second-order critical point results in a global optimum for $\sigma \leq n^{1/4}/18$~\cite[Prop.\,4.5]{bandeira2014tightness}.}
		\label{fig:EIGbeatssignal}
	\end{center}
\end{figure}

\clearpage
\bibliographystyle{plain}
\bibliography{../../boumal}

\appendix

\section{Connectedness of the set of accumulation points} \label{apdx:connectedness}

Similarly to~\cite[Thm.\,4]{journee2010generalized}, convergence of Algorithm~\ref{algo:ppm} can be further controlled after Lemma~\ref{lem:ppmaccumpts} if we accept to make $\tilde C$ \emph{strictly} positive definite by increasing $\alpha$. This inertia increase has the counter-effect of slowing down convergence and strengthening the conditions on $\Delta$ in the lemmas and theorems of Section~\ref{sec:ppm}. These counter-effects are mitigated by the fact that the increase in $\alpha$ can be taken arbitrarily small.
\begin{lemma}\label{lem:connectedness}
	If $\lambda \triangleq \lambda_{\min}(\tilde C) > 0$, then for all $K \geq 1$,
	\begin{align}
	\sum_{k = 0}^{K-1} \|x_{k+1} - x_k\|_2^2 \leq \frac{f(x_K)-f(x_0)}{\lambda} \leq \frac{f_\infty-f(x_0)}{\lambda},
	\label{eq:connectedness}
	\end{align}
	where $f_\infty = \lim_{k\to\infty} f(x_k)$. As a result, the set of accumulation points of $x_0, x_1, \ldots$ is connected, and all of them are fixed points.
\end{lemma}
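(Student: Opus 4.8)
The plan is to sharpen the monotonicity estimate of Lemma~\ref{lem:monotone} using that $\tilde C$ is now \emph{strictly} positive definite. Write $g(x) = x^*\tilde C x$, so that $f - g$ is constant on $\Cn_1$ and it suffices to prove~\eqref{eq:connectedness} with $g$ in place of $f$. For any $x, y \in \Cn$, setting $d = y - x$ and using that $\tilde C$ is Hermitian with $\lambdamin(\tilde C) = \lambda$, one has the strong-convexity identity
\begin{align*}
g(y) - g(x) = 2\inner{d}{\tilde C x} + d^*\tilde C d \geq 2\inner{d}{\tilde C x} + \lambda \|d\|_2^2,
\end{align*}
where $\inner{a}{b} = \Re\{a^*b\}$.

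First I would substitute $y = x_{k+1} = T(x_k)$. Exactly as in the proof of Lemma~\ref{lem:monotone}, the first-order term satisfies $\inner{x_{k+1} - x_k}{\tilde C x_k} = \|\tilde C x_k\|_1 - g(x_k) \geq 0$, since $x_{k+1}$ maximizes $\inner{y}{\tilde C x_k}$ over $y \in \Cn_1$ and $x_k$ is feasible. Dropping this nonnegative term leaves
\begin{align*}
g(x_{k+1}) - g(x_k) \geq \lambda \|x_{k+1} - x_k\|_2^2.
\end{align*}
Summing over $k = 0, \ldots, K-1$ telescopes the left side to $g(x_K) - g(x_0) = f(x_K) - f(x_0)$, and $f(x_K) \leq f_\infty$ by Lemma~\ref{lem:monotone}; this yields~\eqref{eq:connectedness}. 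Letting $K \to \infty$, the nonnegative series $\sum_k \|x_{k+1} - x_k\|_2^2$ converges, so in particular $\|x_{k+1} - x_k\|_2 \to 0$.

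To finish, I would invoke the classical topological fact (as in~\cite[Thm.\,4]{journee2010generalized}) that a sequence in a compact metric space whose consecutive terms converge together has a nonempty, compact, \emph{connected} set of accumulation points. Concretely: if the accumulation set $A \subseteq \Cn_1$ split as a disjoint union $A_1 \cup A_2$ of nonempty compact sets, then $\dist(A_1, A_2) = 3\delta > 0$; take the open $\delta$-neighborhoods $U_1, U_2$ of $A_1, A_2$. All but finitely many iterates lie in $U_1 \cup U_2$ (its complement in $\Cn_1$ is compact and contains no accumulation point), each $U_i$ is visited infinitely often, and once $\|x_{k+1} - x_k\|_2 < \delta$ the sequence cannot pass from $U_1$ to $U_2$ without landing in the complement --- forcing infinitely many iterates there, a contradiction. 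That every accumulation point is a fixed point of $T$ is already contained in Lemma~\ref{lem:ppmaccumpts}.

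The main obstacle is not the analytic estimate --- a one-line strong-convexity computation plus a telescoping sum --- but stating the topological conclusion carefully: one must set up the separation of the two hypothetical pieces of $A$ by disjoint neighborhoods and use $\|x_{k+1} - x_k\|_2 \to 0$ to rule out the sequence jumping across the gap between them. Everything else is routine given the earlier lemmas.
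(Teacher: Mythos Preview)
Your proposal is correct and matches the paper's proof essentially line for line: the same strong-convexity inequality for $g(x)=x^*\tilde Cx$, the same nonnegativity of $\inner{x_{k+1}-x_k}{\tilde Cx_k}$ from the Frank--Wolfe step, the same telescoping sum, and the same contradiction argument for connectedness based on $\|x_{k+1}-x_k\|_2\to 0$ preventing jumps between two separated pieces of the $\omega$-limit set. The only cosmetic difference is that the paper phrases the separation via disjoint open sets containing the closed $\omega$-limit set, whereas you split the limit set into two compact pieces and take $\delta$-neighborhoods; these are equivalent.
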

\begin{proof}
	By strong convexity of $g(x) = x^*\tilde C x$, it holds for all $y\in\Cn$ that
	\begin{align*}
	g(y) - g(x_k) \geq 2\innersmall{y - x_k}{\tilde Cx_k} + \lambda \|y-x_k\|_2^2.
	\end{align*}
	In particular, for $y = T(x_k) = x_{k+1}$ which maximizes $y\mapsto\innersmall{y - x_k}{\tilde Cx_k}$ over $\Cn_1$ and makes it nonnegative, it further holds that
	\begin{align*}
	g(x_{k+1}) - g(x_k) \geq \lambda \|x_{k+1}-x_k\|_2^2.
	\end{align*}
	Sum for $k = 0, \ldots, K-1$ to establish~\eqref{eq:connectedness}.
	
	For contradiction,\footnote{The argument is based on a post by Leo on Mathematics Stack Exchange, question 848884.}
	assume the set $\omega(x_0)$ of accumulation points of $(x_k)_{k=0,1\ldots}$ (its $\omega$-limit set) is disconnected. Then, since $\omega(x_0)$ is closed (it can be defined as the intersection of a countably infinite number of closed sets), there exist two disjoint open sets $A, B \subset \Cn_1$ such that $\omega(x_0) \subset A \cup B$, there exist $a \in A \cap \omega(x_0)$ and $b\in B \cap \omega(x_0)$, and $d \triangleq \inf_{a \in A, b \in B} \|a-b\|_2 > 0$. From~\eqref{eq:connectedness}, there exists $K$ such that $k > K \implies \|x_{k+1}-x_{k}\|_2 < d$, that is, past this index, there can be no ``jump'' from $A$ to $B$ in a single step, and vice versa. Let $K < a_1 < a_2 < \cdots$ index a converging subsequence such that $x_{a_1}, x_{a_2}\ldots$ converges to $a$, with $x_{a_k} \in A$ for all $k$, and similarly for $K < b_1 < b_2 < \cdots$. Discard elements of these subsequences to ensure $a_1 < b_1 < a_2 < b_2 < \cdots$: this does not affect their limit. Since $\|x_{a_k} - x_{b_k}\|_2 \geq d$, there must exist $a_k < c_k < b_k$ such that the sequence $x_{c_1}, x_{c_2}\ldots$ lives in $\Cn_1 \backslash (A \cup B)$, which is compact since $A,B$ are open. Hence, $x_{c_1}, x_{c_2}\ldots$ admits a subsequence converging in $\Cn_1 \backslash (A \cup B)$. This is a contradiction because, by definition, all subsequences of $x_1, x_2\ldots$ converge in $\omega(x_0) \subset A \cup B$.

	That all accumulation points are fixed points of $T$ follows from Lemma~\ref{lem:ppmaccumpts}.
\end{proof}

\section{Technical convergence lemma}

\begin{lemma} \label{lem:cvgcelemma}
	Let $\calM$ be a nonempty, compact metric space and let $f \colon \calM \to \reals$ be a continuous function on $\calM$. Let $x^*\in\calM$ be such that $f(x) = f(x^*) \iff x = x^*$. If $(x_k)_{k=0,1,\ldots}$ is a sequence such that $\lim_{k\to\infty} f(x_k) = f(x^*)$, then $(x_k)_{k=0,1,\ldots}$ converges to $x^*$.
\end{lemma}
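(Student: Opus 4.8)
The plan is a routine compactness-and-continuity argument by contradiction. Suppose the conclusion fails: then there is some $\varepsilon > 0$ such that $d(x_k, x^*) \geq \varepsilon$ for infinitely many indices $k$, where $d$ denotes the metric on $\calM$. Extract the corresponding subsequence $(x_{k_j})_{j=0,1,\ldots}$, so that $d(x_{k_j}, x^*) \geq \varepsilon$ for all $j$.

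Next I would invoke compactness of $\calM$: the subsequence $(x_{k_j})_j$ admits a further subsequence converging to some limit $y \in \calM$. Since the distance to $x^*$ is a continuous function on $\calM$ and each term of the subsequence satisfies $d(x_{k_j}, x^*) \geq \varepsilon$, passing to the limit gives $d(y, x^*) \geq \varepsilon > 0$, hence $y \neq x^*$.

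Now I would bring in the hypotheses on $f$. By continuity of $f$, along the convergent sub-subsequence we have $f(x_{k_j}) \to f(y)$. But $f(x_k) \to f(x^*)$ by assumption, and every subsequence of a convergent sequence has the same limit, so $f(y) = f(x^*)$. The defining property of $x^*$ (namely $f(x) = f(x^*) \iff x = x^*$) then forces $y = x^*$, contradicting $y \neq x^*$. Therefore $d(x_k, x^*) \to 0$, i.e.\ $(x_k)_{k=0,1,\ldots}$ converges to $x^*$.

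There is no real obstacle here; the only point requiring minimal care is the double extraction of subsequences (first isolating the terms staying $\varepsilon$-away from $x^*$, then using sequential compactness within that subsequence), and the observation that a sequence converges to $x^*$ iff every subsequence has a further subsequence converging to $x^*$ — which is exactly what the argument establishes in contrapositive form.
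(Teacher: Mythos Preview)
Your proof is correct and follows essentially the same compactness-and-continuity argument as the paper: extract a subsequence bounded $\varepsilon$-away from $x^*$, pass to a convergent sub-subsequence by compactness, and reach a contradiction via continuity of $f$ and the uniqueness property of $x^*$. The paper's version organizes the steps slightly differently (first showing every accumulation point equals $x^*$, then invoking compactness of the closed set $\{x : d(x^*,x)\ge\varepsilon\}$), but the substance is identical.
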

\begin{proof}
	Let $t_0 < t_1 < \cdots$ index any convergent subsequence of $(x_k)_{k=0,1,\ldots}$ (since $\calM$ is compact, one must exist). By continuity of $f$,
	\begin{align*}
		f(x^*) & = \lim_{k\to\infty} f(x_k) = \lim_{k\to\infty} f(x_{t_k}) = f\left(\lim_{k\to\infty} x_{t_k} \right).
	\end{align*}
	By assumption on $f$, this implies $\lim_{k\to\infty} x_{t_k} = x^*$. Hence, $x^*$ is the unique accumulation point of $(x_k)_{k=0,1,\ldots}$. Thus, if $(x_k)_{k=0,1,\ldots}$ converges, it does so to $x^*$. Convergence is equivalent to the following statement ($d$ is a distance on $\calM$):
	\begin{align*}
		\forall \epsilon > 0, \exists K \textrm{ such that } \forall k > K, d(x^*, x_k) < \epsilon.
	\end{align*}
	For contradiction, assume $(x_k)_{k=0,1,\ldots}$ does not converge. Then,
	\begin{align*}
		\exists \epsilon > 0 \textrm{ such that } \forall K, \exists k > K \textrm{ such that } d(x^*, x_k) \geq \epsilon.
	\end{align*}
	Thus, we may extract a subsequence $x_{s_0}, x_{s_1}, \ldots$ such that $d(x^*, x_{s_k}) \geq \epsilon$ for all $k$. Since $\{ x \in \calM : d(x^*, x) \geq \epsilon \}$ is compact, the latter subsequence admits a subsequence converging in the latter set. But this is impossible, since all convergent subsequences of $(x_k)_{k=0,1,\ldots}$ converge to $x^*$. Hence, $(x_k)_{k=0,1,\ldots}$ converges to $x^*$.
\end{proof}

\section{Projection to $\Cn_1$}\label{apdx:eigimprovement}

We give a superficially different statement and proof of~\citep[Lemma~2]{liu2016statistical}, showing that $\ell_2$ distance to $z\in\Cn_1$ increases by at most a factor of 2 after projection from $\Cn$ to $\Cn_1$.
\begin{lemma}
	Let $z \in \Cn_1$ and $v \in \Cn$. Let $\hat v \in \Cn_1$ be a projection of $v$ to $\Cn_1$, as
	\begin{align*}
		\hat v_i & = \begin{cases}
		\frac{v_i}{|v_i|} & \textrm{ if } v_i \neq 0, \\
		a_i & \textrm{ otherwise},
		\end{cases}
	\end{align*}
	where $a \in \Cn_1$ is arbitrary. Then,
	\begin{align*}
		\|\hat v - z\|_2 \leq 2 \|v - z\|_2.
	\end{align*}
\end{lemma}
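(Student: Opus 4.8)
The plan is to reduce the inequality to a one-dimensional (per-coordinate) estimate and then sum. Since $\|\hat v - z\|_2^2 = \sum_{i=1}^n |\hat v_i - z_i|^2$ and $\|v - z\|_2^2 = \sum_{i=1}^n |v_i - z_i|^2$, it suffices to prove that $|\hat v_i - z_i| \leq 2|v_i - z_i|$ for each index $i$, and then take square roots after summing.

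The key observation is that $\hat v_i$ is, by construction, a nearest point to $v_i$ on the unit circle $\{w\in\complex : |w| = 1\}$. When $v_i \neq 0$, the point $\hat v_i = v_i/|v_i|$ is exactly the metric projection of $v_i$ onto that circle; when $v_i = 0$, every point of the circle lies at distance $1$ from $v_i$, so the arbitrary unit-modulus choice $\hat v_i = a_i$ is also a nearest point. Since $z_i$ lies on the unit circle (as $z\in\Cn_1$), it is a competitor for this nearest-point problem, and therefore $|v_i - \hat v_i| \leq |v_i - z_i|$.

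Combining this with the triangle inequality gives, for every $i$,
\[
|\hat v_i - z_i| \leq |\hat v_i - v_i| + |v_i - z_i| \leq 2|v_i - z_i|.
\]
Squaring, summing over $i = 1, \ldots, n$, and taking square roots yields $\|\hat v - z\|_2 \leq 2\|v - z\|_2$, as claimed.

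There is essentially no obstacle here: the entire content is the remark that entrywise modulus normalization realizes the nearest-point map onto the unit circle, so that $z$ itself is an admissible competitor. (Alternatively, one could verify $|v_i/|v_i| - z_i| \leq 2|v_i - z_i|$ directly by writing $v_i = \rho e^{i\phi}$ and $z_i = e^{i\psi}$ and comparing the two sides, but the projection argument is shorter and deals with the degenerate case $v_i = 0$ uniformly.)
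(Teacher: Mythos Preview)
Your proof is correct. Both you and the paper reduce to the per-coordinate inequality $|\hat v_i - z_i| \leq 2|v_i - z_i|$; the difference lies in how that inequality is established. The paper writes $v_i = re^{i\theta}z_i$, computes $|\hat v_i - z_i|^2 = 2(1-\cos\theta)$ and $|v_i - z_i|^2 = 1 + r^2 - 2r\cos\theta$, minimizes the latter over $r\geq 0$, and then verifies the resulting trigonometric inequality case by case. Your argument is shorter and more conceptual: you observe that $\hat v_i$ is a nearest point to $v_i$ on the unit circle, so $|v_i - \hat v_i| \leq |v_i - z_i|$ because $z_i$ is itself on that circle, and then the triangle inequality finishes. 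This handles the degenerate case $v_i = 0$ uniformly and avoids any computation; the paper's explicit parametrization, on the other hand, makes the constant $2$ visibly sharp (attained as $\theta \to 0$ with $r = \cos\theta$).
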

\begin{proof}
	We show the bound holds for individual entries, that is,
	\begin{align*}
		\forall i, \quad |\hat v_i - z_i| \leq 2|v_i - z_i|.
	\end{align*}
	This is certainly true for $i$ such that $v_i = 0$. For $i$ such that $v_i \neq 0$, let $v_i = re^{i\theta} z_i$ with $r \geq 0$. Then,
	\begin{align*}
		|\hat v_i - z_i|^2 & = |e^{i\theta} - 1|^2 = (\cos \theta - 1)^2 + (\sin \theta)^2 = 2(1-\cos\theta), \textrm{ and} \\
		|v_i - z_i|^2 & = |re^{i\theta} - 1|^2 = (r\cos\theta-1)^2 + (r\sin\theta)^2 = 1 + r^2 - 2r\cos\theta.
	\end{align*}
	Minimizing the last quantity with respect to $r \geq 0$ yields $r = \cos\theta$ if $\cos\theta \geq 0$ and $r = 0$ otherwise, so that
	\begin{align*}
		|v_i - z_i| \geq f_2(\theta) \triangleq \min_{r\geq 0} \sqrt{1 + r^2 - 2r\cos\theta} = \begin{cases}
		\sqrt{1-(\cos\theta)^2} = |\sin\theta| & \textrm{ if } \cos\theta \geq 0, \\
		1 & \textrm{ otherwise.}
		\end{cases}
	\end{align*}
	Defining $|\hat v_i - z_i| = f_1(\theta) \triangleq \sqrt{2(1-\cos\theta)}$, it is easy to verify that $$|\hat v_i - z_i| = f_1(\theta) \leq 2f_2(\theta) \leq 2|v_i - z_i|,$$ which concludes the proof.
\end{proof}

\end{document}